\numberwithin{equation}{section}
\newcommand{\C}{\mathbb{C}}
\newcommand{\E}{{\mathcal E}}
\newcommand{\F}{\mathbf{F}}
\newcommand{\abracket}[1]{\left\langle#1\right\rangle}
\newcommand{\bbracket}[1]{\left[#1\right]}
\newcommand{\fbracket}[1]{\left\{#1\right\}}
\newcommand{\bracket}[1]{\left(#1\right)}
\newcommand{\mc}{\mathcal}
\newcommand{\pa}{\partial}
\renewcommand{\dbar}{\bar\pa}
\newcommand{\OO}{{\mathcal O}}
\newcommand{\BV}{Batalin-Vilkovisky }
\newcommand{\into}{\hookrightarrow}
\newcommand{\Ol}{\mathcal O_{loc}}
\newcommand{\iso}{\cong}
\newcommand{\M}{{\mathcal M}}
\renewcommand{\H}{{\mathcal H}}
\newcommand{\A}{\mathcal A}
\renewcommand{\S}{\mathcal S}
\renewcommand{\L}{\mathcal L}
\newcommand{\WL}{\widehat{\mathcal L}}
\DeclareMathOperator{\End}{End}
\DeclareMathOperator{\Sym}{Sym}
\DeclareMathOperator{\Hom}{Hom}
\DeclareMathOperator{\Tr}{Tr}
\DeclareMathOperator{\PV}{PV}
\DeclareMathOperator{\HH}{H}
\DeclareMathOperator{\Res}{Res}
\theoremstyle{plain}
\newtheorem{thm-defn}{Theorem/Definition}[section]
\newtheorem{lem}{Lemma}[section]
\newtheorem{lem-defn}{Lemma/Definition}[section]
\newtheorem{prop}{Proposition}[section]
\newtheorem{cor}{Corollary}[section]
\theoremstyle{definition}
\newtheorem{defn}{Definition}[section]
\theoremstyle{remark}
\newtheorem{rmk}{Remark}[section]
\begin{document}

 \title{Variation of Hodge structures, Frobenius manifolds and Gauge theory}
  \author{Si Li}
  \date{}

  \maketitle


\begin{abstract} We explain the homological relation between the Frobenius structure on the  deformation space of Calabi-Yau manifold and the gauge theory of  Kodaira-Spencer gravity. We show that the genus zero generating function of descendant invariants on Calabi-Yau manifolds from Barannikov's semi-infinite variation of Hodge structures is equivalent to the Kodaira-Spencer gauge theory at tree level.
\end{abstract}


\section{Introduction}
Frobenius structure first appeared around 1983 from K.Saito's theory of higher residues and  primitive forms in singularity theory \cite{Saito-residue,Saito-primitive,Saito-universal}. The concept of Frobenius manifolds was introduced by B.Dubrovin \cite{Dubrovin} to axiomatize the two-dimensional topological field theories at genus zero, and the general structure of higher residues was reformulated by Barannikov as the notion of \emph{semi-infinite variation of Hodge structures} \cite{Barannikov-thesis,Barannikov-quantum,Barannikov-projective} in the study of mirror symmetry on Calabi-Yau manifolds. This is reinterpreted as the symplectic geometry of Lagrangian cone via Givental's loop space formalism \cite{Givental-Frobenius}.

One of the key object in Frobenius structure is the existence of local potential function. In mirror symmetry, the potential function in the A-model is given by the generating functional of genus zero Gromov-Witten invaraints, while in the B-model it is related to the special geometry on the deformation space of Calabi-Yau manifolds. Mirror symmetry at genus zero can be stated as the equivalence of potential functions in terms of flat coordinates on mirror Calabi-Yau manifolds,  which has been proven to be true for a large class of examples \cite{Givental-mirror, LLY}.

The relevant potential function is also called \emph{prepotential} in physics terminology.
It is originally observed in physics \cite{BCOV} that the prepotential on the moduli space of Calabi-Yau three-folds can be described in terms of tree diagrams from a gauge theory, which they called the \emph{Kodaira-Spencer theory of gravity}.  Motivated by this observation, it is believed that the Kodaira-Spencer theory of gravity provides an equivalent gauge theory description of B-twisted topological string on Calabi-Yau three-folds, and the higher genus analogy of potential functions are given by Feynman integrals with higher loop diagrams. Such interpretation allows us to analyze the structure of higher genus partition functions \cite{BCOV}, which has been further developed by Yamaguchi-Yau \cite{Yamaguchi-Yau} and Huang-Klemm-Quackenbush \cite{Klemm-Huang} to predict higher genus Gromov-Witten invariants on quintic three-folds.

However, the higher loop Feynman integrals usually exhibit singularities, which require renormalization to be well-defined. The mathematical analysis of such renormalization is initiated in \cite{Si-Kevin}, where the original formulation of Kodaira-Spencer gauge theory on Calabi-Yau three-folds is generalized to arbitrary dimensions with gravitational descendants included, which we call \emph{BCOV theory}.

In this paper we make the link between gauge theoretical aspects of BCOV theory and the semi-infinite variation of Hodge structures. We show that the full descendant generating function of our generalized BCOV theory \cite{Si-Kevin} at tree level is equivalent to that constructed by Barannikov \cite{Barannikov-quantum} via deformation theory. This generalizes the results in \cite{BCOV, Barannikov-Kontsevich} in full generality with gravitational descendants.

\noindent \textbf{Acknowledgement}: The author would like to thank Kevin Costello for discussions on quantum field theory,  thank Cumrun Vafa for discussions on BCOV theory, thank Kyoji Saito for discussions on primitive forms, and thank Shing-Tung Yau for discussions on Calabi-Yau geometry.

\section{Symplectic geometry of Frobenius structures}  In this section we will briefly discuss the symplectic geometry of Frobenius structures following Barannikov and Givental, and illustrate the relation with K. Saito's theory of primitive forms. A different approach along this line to primitive forms  via polyvector fields in the Landau-Ginzburg model is developed in  \cite{Li-LG, Chang-Li-Saito}.

\subsection{Semi-infinite variation of Hodge structures}
We follow the presentation in \cite{Gross-mirror} for the discussion of semi-infinite variation of Hodge structures.
\begin{defn} A \emph{semi-infinite variation of Hodge structure} (${\infty\over 2}$-VHS) parametrized by a space $\M$ is a graded locally free $\OO_\M[[t]]$-module $\E$ of finite rank with a flat (Gauss-Manin) connection
$$
       \nabla^{GM}: \E\to \Omega_M^1\otimes t^{-1}\E
$$
and a pairing
$$
     \bracket{-,-}_{\E}: \E\times \E\to \OO_M[[t]]
$$
satisfying
\begin{enumerate}
\item $\bracket{\mu_1,\mu_2}_{\E}(t)=(-1)^{|\mu_1||\mu_2|}\bracket{\mu_2,\mu_1}_{\E}(-t)$. Here we view $\bracket{\mu_1,\mu_2}_\E$ as a function of $t$, and $|\mu_i|$ is the degree of $\mu_i$.
\item $\bracket{f(t)\mu_1,\mu_2}_\E=\bracket{\mu_1, f(-t)\mu_2}_\E=f(t)\bracket{\mu_1,\mu_2}_\E$, $\forall f\in \C((t))$
\item $V\bracket{\mu_1,\mu_2}_\E=\bracket{\nabla^{GM}_{V}\mu_1,\mu_2}_\E+\bracket{\mu_1, \nabla^{GM}_{V}\mu_2}_\E$, $\forall$ vector field $V$ on $\mc M$.
\item The induced pairing
$$
        \E /t\E\otimes_{\OO_{\mc M}}\E/t\E \to \OO_{\mc M}
$$
is non-degenerate.
\end{enumerate}
The ${\infty\over 2}$-VHS is called \emph{miniversal} if there is a section $s$ of $\E$ such that
$$
       t \nabla^{GM}s: T_\M\to \E/t\E, \quad X\to t\nabla_X^{GM} s
$$
is an isomorphism.
\end{defn}

\begin{rmk}
If we decompose into components
$$
    \bracket{-,-}_{\E}=\sum_{k\geq 0}t^k  \bracket{-,-}_{\E}^{(k)}
$$
then $\bracket{-,-}_{\E}^{(k)}$ plays the role of \emph{higher residue pairing} \cite{Saito-residue}.
\end{rmk}

\begin{rmk}
We will not discuss the extra data like the Euler vector field etc for the purpose of the current paper. They play important role and we refer to the literature for further explanation.
\end{rmk}

\subsection{Symplectic geometry}
We consider the symplectic geometry of ${\infty\over 2}$-VHS. Let
$$
   \mc H= \fbracket{s\in \Gamma\bracket{\M, \E\otimes _{\C[[t]} \C((t))}| \nabla^{GM} s=0}
$$
be the space of flat sections. $\mc H$ is a free $\C((t))$-module with a symplectic pairing defined by
$$
   \omega(\mu_1, \mu_2)\equiv \Res_{t=0} \bracket{\mu_1,\mu_2}_{\E}dt, \quad \forall \mu_1, \mu_2\in \mc H
$$

Given $p\in \M$, and let $\E_p$ be the fiber of $\E$ at $p$, we have a natural embedding
$$
  \E_p\into \mc H
$$
by solving the flat equation with initial condition at $p$. Then $\E_p$ is Lagrangian with respect to the symplectic form $\omega$. We can view $\E_p$ as a moving family of Lagrangian linear subspaces inside $\mc H$.

\begin{defn}
A polarization of $\mc H$ is a Lagrangian subspace $\mc H_-\subset \mc H$ preserved by $t^{-1}$ such that
$$
  \mc H=\mc H_-\oplus \E_p, \quad \forall p\in \M
$$
\end{defn}

We will focus on the situation when $\M$ is a formal scheme with base point $0$. In this case we only need to require that $\mc H=\mc H_-\oplus \E_0$ at the base point.

\begin{lem} $\forall \alpha, \beta\in \mc H_-$, we have
$
   \bracket{\alpha, \beta}_\E \in t^{-2}\C[t^{-1}]
$
\end{lem}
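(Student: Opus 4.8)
The plan is to exploit the two defining properties of a polarization---that $\mc H_-$ is Lagrangian and that it is stable under $t^{-1}$---together with the $\C((t))$-sesquilinearity of the pairing recorded in property (2) of the definition. First I would make sense of $\bracket{\alpha,\beta}_\E$ as an element of $\C((t))$. The pairing is defined on $\E$ with values in $\OO_\M[[t]]$; it extends $\C((t))$-linearly (with the sign twist of (2)) to the flat sections, a priori with values in $\OO_\M((t))$. For $\alpha,\beta\in\mc H$ with $\nabla^{GM}\alpha=\nabla^{GM}\beta=0$, property (3) gives $V\bracket{\alpha,\beta}_\E=0$ for every vector field $V$, so $\bracket{\alpha,\beta}_\E$ is independent of the point of $\M$ and is a genuine element of $\C((t))$ with finite principal part. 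Write it as a Laurent series $\bracket{\alpha,\beta}_\E=\sum_n c_n t^n$.

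Next I would extract the coefficients $c_n$ using the symplectic form. Since $\mc H_-$ is preserved by $t^{-1}$, we have $t^{-k}\alpha\in\mc H_-$ for every $k\geq 0$, and because $\mc H_-$ is isotropic this forces $\omega(t^{-k}\alpha,\beta)=0$. By property (2), $\bracket{t^{-k}\alpha,\beta}_\E=t^{-k}\bracket{\alpha,\beta}_\E$, so
$$
0=\omega(t^{-k}\alpha,\beta)=\Res_{t=0}t^{-k}\bracket{\alpha,\beta}_\E\,dt=c_{k-1}.
$$
Letting $k$ range over all nonnegative integers kills every coefficient $c_n$ with $n\geq -1$. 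Combined with the finiteness of the principal part, this leaves only terms $c_n t^n$ with $n\leq -2$, i.e.\ $\bracket{\alpha,\beta}_\E\in t^{-2}\C[t^{-1}]$, which is the claim.

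The argument is short, so there is no serious obstacle; the two points that need care are the reduction that makes $\bracket{\alpha,\beta}_\E$ an honest element of $\C((t))$ (this is where flatness and property (3) enter) and the residue bookkeeping identifying $\Res_{t=0}t^{-k}\bracket{\sum_n c_n t^n}\,dt$ with $c_{k-1}$. It is worth noting that only the \emph{isotropy} half of the Lagrangian condition is used here, not maximality; maximality is what should be needed for the complementary fact that pairings of sections in $\E_p$ land in $\C[[t]]$, but that is not required for this statement.
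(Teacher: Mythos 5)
Your proof is correct and is essentially the paper's own argument: the paper's proof is the one-line remark that the claim ``follows from the fact that $\mc H_-$ is Lagrangian and preserved by $t^{-1}$,'' and your write-up is precisely the careful unwinding of that remark---$t^{-1}$-stability plus isotropy, fed through property (2) of the pairing and the residue formula defining $\omega$, to kill every coefficient $c_n$ with $n\geq -1$. Your two added points of care (flatness and property (3) making $\bracket{\alpha,\beta}_\E$ an honest constant Laurent series with finite principal part, and the observation that only isotropy rather than maximality of $\mc H_-$ is needed) are correct refinements of the same approach.
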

\begin{proof}
This follows from the fact that $\mc H_-$ is Lagrangian and preserved by $t^{-1}$.
\end{proof}

\begin{cor}\label{pairing}
Let $\alpha, \beta\in t\mc H_-\cap \E_p$, then $\bracket{\alpha,\beta}_\E\in \C$.
\end{cor}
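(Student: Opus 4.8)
The plan is to pin down $\bracket{\alpha,\beta}_\E$ from two opposite sides: membership in $\E_p$ forces it to be regular in $t$ (no poles), while membership in $t\mc H_-$ forces it, via the preceding Lemma, to be a polynomial in $t^{-1}$ (no strictly positive powers of $t$). Intersecting these two constraints leaves only the constants $\C$, which is exactly the assertion $\bracket{\alpha,\beta}_\E\in\C$.

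First I would establish regularity. Since $\alpha,\beta$ lie in $\E_p$, i.e.\ are the flat sections determined by fiber data at $p$, and $\E$ is an $\OO_\M[[t]]$-module, the $\E$-pairing of their fiber representatives at $p$ takes values in $\C[[t]]$, so it has no negative powers of $t$. One can double-check consistency using flatness: because $\alpha$ and $\beta$ are $\nabla^{GM}$-flat, axiom (3) gives $V\bracket{\alpha,\beta}_\E=\bracket{\nabla^{GM}_V\alpha,\beta}_\E+\bracket{\alpha,\nabla^{GM}_V\beta}_\E=0$ for every vector field $V$, so $\bracket{\alpha,\beta}_\E$ is covariantly constant along $\M$ and is a genuine element of $\C[[t]]$ independent of the chosen base point. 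This is the first constraint: $\bracket{\alpha,\beta}_\E\in\C[[t]]$.

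Next I would exploit the factor of $t$. Write $\alpha=t\alpha'$ and $\beta=t\beta'$ with $\alpha',\beta'\in\mc H_-$, and peel off the two factors of $t$ using the $\C((t))$-sesquilinearity in axiom (2). Pulling $t$ out of the first slot contributes a factor $t$, while pulling $t$ out of the second slot contributes a factor $-t$ on account of the $f(-t)$ convention, so that $\bracket{\alpha,\beta}_\E=-t^2\bracket{\alpha',\beta'}_\E$ (the sign is immaterial for the conclusion). The preceding Lemma applies to $\alpha',\beta'\in\mc H_-$ and yields $\bracket{\alpha',\beta'}_\E\in t^{-2}\C[t^{-1}]$; multiplying by $-t^2$ then gives $\bracket{\alpha,\beta}_\E\in\C[t^{-1}]$. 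This is the second constraint.

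Finally I would intersect the two to get $\bracket{\alpha,\beta}_\E\in\C[[t]]\cap\C[t^{-1}]=\C$, which completes the proof. The only step that needs genuine care --- and the one I would treat as the main obstacle --- is the bookkeeping in the third paragraph: correctly tracking the sign/conjugation convention of axiom (2) when extracting each factor of $t$, and confirming that the $\OO_\M[[t]]$-valued pairing on $\E$ restricts compatibly to the $\C((t))$-valued pairing on $\mc H$ so that the regularity statement $\bracket{\alpha,\beta}_\E\in\C[[t]]$ is legitimate for an element viewed simultaneously inside $\E_p$ and inside $t\mc H_-$. Everything else is a one-line consequence of the Lemma.
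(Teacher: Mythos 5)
Your proposal is correct and follows essentially the same route as the paper's own proof: membership in $\E_p$ forces $\bracket{\alpha,\beta}_\E\in\C[[t]]$, membership in $t\mc H_-$ together with the preceding Lemma (after peeling off the two factors of $t$ by sesquilinearity) forces $\bracket{\alpha,\beta}_\E\in\C[t^{-1}]$, and the intersection is $\C$. The paper states these two constraints without spelling out the sesquilinearity bookkeeping or the flatness consistency check; your extra details are accurate but not a different argument.
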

\begin{proof} $\alpha, \beta\in \E_p$ implies
    $$
       \bracket{\alpha, \beta}_{\E}\in \C[[t]]
    $$
    and $\alpha, \beta\in t\mc H_-$ implies that
    $$
          \bracket{\alpha, \beta}_{\E}\in \C[t^{-1}]
    $$
The corollary follows.
\end{proof}

\subsection{Frobenius structure}
We recall Barannikov's construction of Frobenius manifolds from semi-infinite variation of Hodge structures.
We refer to \cite{Manin-Frobenius} for basics on Frobenius structures.

\subsubsection{Semi-infinite period map}
Let $\fbracket{\M, \E, \nabla^{GM}, \bracket{-,-}_\E}$ be a miniversal ${\infty\over 2}$-VHS with base point $0\in \M$, and $\mc H_-$ be a chosen polarization such that
$$
  \mc H=\mc H_-\oplus \E_0
$$
We assume that there exists an element $
   \Omega_0\in t \mc H_-
$
such that the section
$$
  s=\bracket{\Omega_0+ \H_-}\cap \E \in \Gamma\bracket{\M, \E}
$$
yields the miniversality. This is called a \emph{semi-infinite period map} \cite{Barannikov-quantum}.

The following lemma follows easily from the definition
\begin{lem} For any vector field $X$ on $\M$,
$$
t\nabla^{GM}_Xs\in t\mc H_-\cap \E
$$
where
$$
   t\mc H_-\cap \E=\fbracket{\alpha\in \Gamma(\M, \mc E)| \alpha(p)\in t\mc H_-, \forall p\in \M}
$$
It defines an isomorphism of $\OO_\M$-modules
$$
    t\nabla^{GM}s: T_\M\to  t\mc H_-\cap \E
$$
\end{lem}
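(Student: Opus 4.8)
The plan is to regard $\nabla^{GM}$ as an honest flat connection on the $\C((t))$-module bundle $\E\otimes_{\C[[t]]}\C((t))$, whose flat sections are exactly $\mc H$, and to read off the variation of $s$ inside the \emph{fixed} space $\mc H$. Fixing parallel transport, I identify each fiber $\E_p$ (extended over $\C((t))$) with $\mc H$ by sending a vector to the flat section through it; under this identification the section $s$ becomes a family $p\mapsto\tilde s(p)$, where by its very construction $\tilde s(p)\in\Omega_0+\mc H_-$, say $\tilde s(p)=\Omega_0+h(p)$ with $h(p)\in\mc H_-$. The standard relation between a connection and its parallel transport then shows that $\nabla^{GM}_Xs$, transported into $\mc H$, is precisely the derivative $X\tilde s$ of this $\mc H$-valued family.

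First I would establish the containment. Since $\Omega_0\in t\mc H_-\subset\mc H$ is a fixed flat section, it is constant and $X\tilde s=Xh$; as $\mc H_-$ is a fixed $\C$-linear subspace and $h(p)\in\mc H_-$ for all $p$, the derivative of the curve $h$ remains in $\mc H_-$. Hence the flat section through $\nabla^{GM}_Xs(p)$ lies in $\mc H_-$. Multiplying by $t$, which commutes with flat transport because $t$ is a coefficient constant for $\nabla^{GM}$, moves this into $t\mc H_-$, while the defining property $\nabla^{GM}\colon\E\to\Omega^1_\M\otimes t^{-1}\E$ guarantees $t\nabla^{GM}_Xs\in\E$. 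Combining the two, $t\nabla^{GM}_Xs(p)\in t\mc H_-\cap\E_p$ for every $p$, which is exactly the asserted containment $t\nabla^{GM}_Xs\in t\mc H_-\cap\E$.

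For the isomorphism I would reduce to the fiberwise linear-algebra claim that the natural projection $t\mc H_-\cap\E\to\E/t\E$ is an isomorphism of $\OO_\M$-modules. Applying the $\C((t))$-linear automorphism ``multiplication by $t$'' to the polarization splitting $\mc H=\mc H_-\oplus\E_p$ (which holds on the formal neighborhood of the base point) yields $\mc H=t\mc H_-\oplus t\E_p$. Injectivity is then immediate, since the kernel of $t\mc H_-\cap\E_p\to\E_p/t\E_p$ lies in $t\mc H_-\cap t\E_p=0$; for surjectivity, given $v\in\E_p$ write $v=w+u$ with $w\in t\mc H_-$ and $u\in t\E_p$, and note $w=v-u\in\E_p$, so $w\in t\mc H_-\cap\E_p$ represents the class of $v$. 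This gives $\E_p=(t\mc H_-\cap\E_p)\oplus t\E_p$ and hence the claim.

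Finally I would assemble the pieces: by miniversality the composite $X\mapsto t\nabla^{GM}_Xs\bmod t\E$ is an isomorphism $T_\M\xrightarrow{\sim}\E/t\E$, and by the containment above it factors as $T_\M\xrightarrow{t\nabla^{GM}s}t\mc H_-\cap\E\to\E/t\E$ with second arrow the isomorphism just established, forcing $t\nabla^{GM}s\colon T_\M\to t\mc H_-\cap\E$ to be an isomorphism; $\OO_\M$-linearity is automatic from the tensoriality of $\nabla^{GM}$ in the vector-field slot. The main obstacle is the first paragraph, namely making rigorous the passage from the $t$-twisted Gauss--Manin connection on $\E$ to ordinary parallel transport in the fixed $\C((t))$-space $\mc H$, and confirming that the period family $\tilde s$ moves within the constant Lagrangian coset $\Omega_0+\mc H_-$. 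Once this identification is secure, both the containment and the isomorphism follow formally.
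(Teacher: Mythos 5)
Your proof is correct and is exactly the intended argument: the paper gives no proof at all (it states only that the lemma ``follows easily from the definition''), and the route it has in mind is precisely yours --- trivialize $\E\otimes_{\C[[t]]}\C((t))$ by flat sections so that $s$ moves in the fixed coset $\Omega_0+\mc H_-$, deduce $\nabla^{GM}_X s\in \mc H_-\cap t^{-1}\E$ and hence $t\nabla^{GM}_Xs\in t\mc H_-\cap\E$, then combine the splitting $\mc H=t\mc H_-\oplus t\E_p$ with miniversality to force the isomorphism. The two points you flag as needing care (extending the polarization splitting from the base point over the formal neighborhood, and promoting the fiberwise decomposition $\E_p=(t\mc H_-\cap\E_p)\oplus t\E_p$ to an $\OO_\M$-module statement) are both handled by standard Nakayama-type arguments on the formal base, so there is no gap.
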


\begin{lem}Given any vector field $X\in T_\M$,
$$
   \nabla^{GM}_X:  t\mc H_-\cap \E\to  \bracket{t\mc H_-\cap \E}\oplus \bracket{\mc H_- \cap t^{-1}\E}
$$
\end{lem}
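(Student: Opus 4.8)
The plan is to pass to the fixed $\C((t))$-vector space $\mc H$ of flat sections, where the Gauss-Manin connection becomes transparent. I would choose a flat frame $\{e_i\}$ compatible with the polarization, i.e.\ a $\C((t))$-basis of $\mc H$ with $\mc H=\bigoplus_i e_i\,\C((t))$ and $\mc H_-=\bigoplus_i e_i\, t^{-1}\C[t^{-1}]$; such a frame exists precisely because $\mc H_-$ is Lagrangian and $t^{-1}$-stable. Writing a section $\alpha$ of $\E$ as an $\mc H$-valued function $p\mapsto \alpha(p)=\sum_i g_i(p,t)\,e_i$ with $g_i(p,t)\in\C((t))$, flatness of the $e_i$ gives $\nabla^{GM}_X\alpha=\sum_i (Xg_i)\,e_i$, so along $\M$ the connection merely differentiates the coefficients and does nothing to the variable $t$. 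The statement then reduces to controlling two features of $\nabla^{GM}_X\alpha$ separately: its order of pole in $t$, and its membership in the fixed subspace $t\mc H_-$.

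For the pole order I would invoke the Griffiths transversality built into the definition of the connection: $\nabla^{GM}$ maps $\E$ into $\Omega^1_\M\otimes t^{-1}\E$, hence $\nabla^{GM}_X\alpha\in t^{-1}\E$. For the subspace membership, observe that $\alpha\in t\mc H_-\cap\E$ forces each $g_i(p,t)$ to lie in $\C[t^{-1}]$; since $X$ acts only on the $p$-dependence and leaves the powers of $t$ untouched, $Xg_i$ is again a polynomial in $t^{-1}$, so $\nabla^{GM}_X\alpha(p)\in t\mc H_-$ for every $p$. Combining the two, $\nabla^{GM}_X\alpha$ is a section of $t\mc H_-\cap t^{-1}\E$.

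It then remains to split $t\mc H_-\cap t^{-1}\E$ into the two summands, using the polarization $\mc H=\mc H_-\oplus \E_p$ fiberwise. Given $\gamma\in t\mc H_-\cap t^{-1}\E$, I would decompose $\gamma=\gamma_-+\gamma_+$ with $\gamma_-\in\mc H_-$ and $\gamma_+\in\E_p$. Since $t^{-1}$-stability of $\mc H_-$ gives $\mc H_-\subset t\mc H_-$, we get $\gamma_+=\gamma-\gamma_-\in t\mc H_-$, whence $\gamma_+\in t\mc H_-\cap\E$; dually, since $\gamma_+\in\E_p\subset t^{-1}\E$, we get $\gamma_-=\gamma-\gamma_+\in t^{-1}\E$, whence $\gamma_-\in \mc H_-\cap t^{-1}\E$. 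The sum is direct because $\mc H_-\cap \E_p=0$ is part of the polarization hypothesis. Applying this pointwise to $\gamma=\nabla^{GM}_X\alpha$ produces the claimed target.

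The step I expect to carry the real content is the subspace-membership claim, that $\nabla^{GM}_X$ sends $t\mc H_-\cap\E$ into $t\mc H_-$: it is exactly here that the $t^{-1}$-invariance of the polarization is essential, through the adapted flat frame in which $\nabla^{GM}_X$ visibly preserves the condition ``coefficients polynomial in $t^{-1}$''. A secondary technical point, which I would verify but not belabor, is that the fiberwise splitting $\mc H=\mc H_-\oplus\E_p$ varies algebraically in $p$, so that $\gamma_\pm$ are genuine sections over $\M$ rather than merely pointwise decompositions.
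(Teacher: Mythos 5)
Your proof is correct, but it takes a genuinely different route from the paper's. The paper leans on the preceding lemma, which is where miniversality and the semi-infinite period map enter: every section of $t\mc H_-\cap \E$ has the form $t\nabla^{GM}_Y s$ for some vector field $Y$, so it suffices to examine $\nabla^{GM}_X\bracket{t\nabla^{GM}_Y s}$; transversality applied twice to $s$ controls the pole order, while flatness of $\Omega_0$ makes $s-\Omega_0$, hence all of its covariant derivatives, $\mc H_-$-valued, so the result lands in $t\mc H_-\cap t^{-1}\E$. You avoid the period map entirely: you take an \emph{arbitrary} section of $t\mc H_-\cap\E$, expand it in a flat frame adapted to the polarization, and observe that $\nabla^{GM}_X$ only differentiates the coefficient functions, which visibly preserves the condition of having no positive powers of $t$. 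Both arguments terminate with the same identity $t\mc H_-\cap t^{-1}\E=\bracket{t\mc H_-\cap \E}\oplus\bracket{\mc H_-\cap t^{-1}\E}$, which the paper asserts without comment and which you prove correctly from $\mc H=\mc H_-\oplus\E_p$ and $t^{-1}$-stability (your ``secondary technical point'' about the splitting varying algebraically is likewise glossed over by the paper). What your route buys: the lemma is seen to hold for any polarized ${\infty\over 2}$-VHS, with miniversality and the choice of $\Omega_0$ playing no role; the only inputs are Griffiths transversality and $t^{-1}$-stability of $\mc H_-$. What the paper's route buys: given the previous lemma it is a one-line verification, and it keeps the computation in the form of second derivatives of $s$, which is exactly what is reused immediately afterwards to define the connection $\nabla$ and the product $A$ of the Frobenius structure.

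One repair is needed in your justification of the adapted frame: its existence does \emph{not} follow from ``$\mc H_-$ is Lagrangian and $t^{-1}$-stable'' alone. For instance, in $\C((t))e_1\oplus\C((t))e_2$ with $\abracket{e_1,e_2}=1$ and $e_1$, $e_2$ isotropic, the line $\C((t))e_1$ is Lagrangian and stable under $t^{\pm 1}$, yet admits no frame of your kind because it is not a complement of $\E_0$. What you actually need is the polarization splitting $\mc H=\mc H_-\oplus\E_0$ together with $t^{-1}$-stability: take a $\C$-basis $\fbracket{e_i}$ of $t\mc H_-\cap\E_0$ (finite-dimensional, since it injects into $\E_0/t\E_0$); this injection is also surjective, so $\fbracket{e_i}$ is a $\C[[t]]$-basis of $\E_0$ by Nakayama, and then $t^{-1}$-stability gives $\bigoplus_i e_i\,t^{-1}\C[t^{-1}]\subset\mc H_-$, with equality because any discrepancy would lie in $\mc H_-\cap\E_0=0$. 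With this supplied, your argument is complete.
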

\begin{proof}
Any element of $t\mc H_-\cap \E$ can be represented by $t\nabla^{GM}_Y s$ for some $Y\in T_\M$. It follows from the property of $\nabla^{GM}$ and the flatness of $\Omega_0$ that
$$
   \nabla^{GM}_X\bracket{t\nabla^{GM}_Y s}\in t\mc H_-\cap t^{-1} \E=\bracket{t\mc H_-\cap \E}\oplus \bracket{\mc H_- \cap t^{-1}\E}
$$

\end{proof}

The properties of the above two lemmas are the key structures for defining a \emph{primitive form} in the case of singularity theory \cite{Saito-primitive}.

This allows us to construct a Frobenius structure on $\mc M$ as follows. We will identify $T_\M$ with  $t\mc H_-\cap \E$ via $t\nabla^{GM}s$, then above decomposition gives rise to
$$
   \nabla^{GM}=\nabla+{1\over t}A
$$
where $\nabla$ is a connection on $T_\M$, and
$$
   A: T_\M\to \End(T_\M)
$$
By Corollary \ref{pairing}, the pairing from ${\infty\over 2}$-VHS induces a metric $g$ on $T_\M$ since
$$
    g=\bracket{-,-}_\E:  \bracket{t\mc H_-\cap \E}\otimes \bracket{t\mc H_-\cap \E}\to \OO_\M
$$

\begin{prop}The triple $\fbracket{\nabla, A, g}$ defines a Frobenius structure on $\M$.
\end{prop}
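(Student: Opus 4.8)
The plan is to verify, for the triple $\fbracket{\nabla, A, g}$, the defining axioms of a Frobenius structure: that $\nabla$ is a flat torsion-free connection with $\nabla g=0$; that the product $X\circ Y:=A_X Y$ is commutative and associative with $g$ invariant, $g(X\circ Y,Z)=g(X,Y\circ Z)$; and that the structure is \emph{potential}, $(\nabla_X A)_Y=(\nabla_Y A)_X$, which locally produces a potential $\Phi$ with $g(X\circ Y,Z)=\nabla^3\Phi(X,Y,Z)$ (the flat identity for $\circ$ being supplied by the primitive element $\Omega_0$). Every one of these is extracted from three inputs already at hand: flatness of $\nabla^{GM}$, its compatibility with the pairing (axiom (3)), and the special structure of the section $s$ from the semi-infinite period map.

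First I would exploit flatness of $\nabla^{GM}$. Substituting $\nabla^{GM}=\nabla+{1\over t}A$ into the zero-curvature identity $(\nabla^{GM})^2=0$ and collecting powers of $t$ yields three equations: the $t^0$ term reads $\nabla^2=0$, so $\nabla$ is flat; the $t^{-2}$ term reads $A\wedge A=0$, i.e. $[A_X,A_Y]=0$ for all $X,Y$; and the $t^{-1}$ term reads $d^\nabla A=0$. The commutation relation $[A_X,A_Y]=0$ is exactly what is needed for associativity of $\circ$, once commutativity and $g$-invariance are known, via the chain $g(A_XY,A_ZW)=g(Y,A_XA_ZW)=g(Y,A_ZA_XW)=g(A_ZY,A_XW)$.

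Next I would use the distinguished section to obtain torsion-freeness and commutativity, which do not follow from curvature alone. Writing $e_X=t\nabla^{GM}_X s$ and applying flatness to the \emph{single} section $s$ gives $\nabla^{GM}_X e_Y-\nabla^{GM}_Y e_X=e_{[X,Y]}$. Decomposing each term via the second lemma, $\nabla^{GM}_X e_Y\in\bracket{t\mc H_-\cap\E}\oplus\bracket{\mc H_-\cap t^{-1}\E}$, and separating the two summands: the $t\mc H_-\cap\E$ component yields $\nabla_X Y-\nabla_Y X=[X,Y]$, so $\nabla$ is torsion-free, while the $\mc H_-\cap t^{-1}\E$ component yields $A_X Y=A_Y X$, the commutativity of $\circ$. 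With torsion-freeness in hand, the identity $d^\nabla A=0$ collapses to the potentiality relation $(\nabla_X A)_Y=(\nabla_Y A)_X$.

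Finally I would treat the metric. Applying axiom (3) to $e_Y,e_Z$ and substituting $\nabla^{GM}=\nabla+{1\over t}A$, the pairing $\langle e_Y,e_Z\rangle$ and all paired derivatives lie in $\C$ by Corollary \ref{pairing}, so the resulting identity is polynomial in $t^{-1}$; using axiom (2) to move the factor $t^{-1}$ across the pairing (which flips its sign) and then matching powers of $t$ gives $Xg(Y,Z)=g(\nabla_XY,Z)+g(Y,\nabla_XZ)$ at order $t^0$, i.e. $\nabla g=0$, and $g(A_XY,Z)=g(Y,A_XZ)$ at order $t^{-1}$, i.e. each $A_X$ is $g$-self-adjoint; together with commutativity this is the Frobenius invariance $g(X\circ Y,Z)=g(X,Y\circ Z)$. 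Symmetry of $g$ follows from axiom (1) and its non-degeneracy from axiom (4). The main obstacle is the bookkeeping of the third step: torsion-freeness and commutativity genuinely rely on differentiating the single section $s$, so that mixed second derivatives symmetrize, and on the clean splitting of $t\mc H_-\cap t^{-1}\E$ furnished by the polarization; and it is precisely the $t$-parity in axiom (2) that delivers the self-adjointness of $A_X$ with the correct sign rather than an antisymmetry.
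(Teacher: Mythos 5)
Your proof is correct and follows exactly the route the paper intends: the paper's own proof is a single sentence citing ``the flatness of $\nabla^{GM}$ and its compatibility with the pairing $\bracket{-,-}_\E$,'' and your argument is precisely the detailed unpacking of those two ingredients (expansion of the zero-curvature equation in powers of $t$, differentiation of the single section $s$ to get torsion-freeness and commutativity, and the $t$-parity of axiom (2) to get $\nabla g=0$ and self-adjointness of $A_X$), together with the two structure lemmas about $s$ that the paper establishes immediately beforehand.
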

\begin{proof} This follows from the flatness of $\nabla^{GM}$ and its compatibility with the pairing $\bracket{-,-}_\E$.
\end{proof}

\subsubsection{Flat coordinate}
The induced Frobenius structure can be concretely described in terms of flat coordinates. The semi-infinite period map induces a morphism
$$
  \Psi: \M \to t\mc H_-/ \mc H_-, \quad p \to ts(p)
$$
which is a local isomorphism.

There is a natural identification $t\H_-/\H_-\simeq t\H_-\cap \E_0$. Let's choose a basis $\fbracket{\Delta_a}$ of $t\H_-\cap \E_0$, and $\fbracket{\tau^a}$ be the dual linear coordinates.  It gives rise to a local coordinate on $\M$ via the local isomorphism
$$
   \psi: \M\to t\H_-\cap \E_0\simeq t\mc H_-/ \mc H_-
$$
which is called the \emph{flat coordinates}. In terms of flat coordinates, the semi-infinite period map is given by
$$
  s=\Omega_0+ {1\over t}\sum_{a}\tau^a \Delta_a+O(\tau^2)
$$
where the higher order term $O(\tau^2)$ lies in $t^{-1}\mc H_-$.  It follows that
$$
  t\pa_a s\in \bracket{\Delta_a+\mc H_-}\cap \E
$$
and $\fbracket{t\pa_a s}$ forms  a basis of $t\mc H_-\cap \E$. Here $\pa_a$ is short for the derivation with respect to $\tau^a$.

\begin{lem}
In terms of flat coordinates, $g_{ab}:=g(t\pa_as, t\pa_b s)\in \C$ is a constant.
\end{lem}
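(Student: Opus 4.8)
The plan is to prove constancy by showing $\pa_c g_{ab}=0$ for every flat coordinate direction $\pa_c$, the engine being a mismatch in powers of $t$. Write $X_a=t\pa_a s=t\nabla^{GM}_{\pa_a}s\in t\mc H_-\cap\E$, so that $g_{ab}=\bracket{X_a,X_b}_\E$; the two notions of derivative agree because $s$ is expanded in the $\nabla^{GM}$-flat frame of $\mc H$. By Corollary \ref{pairing} each $g_{ab}$ already lies in $\C$ pointwise, hence $g_{ab}$ is a $t$-independent function on $\M$, and so is $\pa_c g_{ab}$. The strategy is then to differentiate the pairing, rewrite the derivative through $\nabla^{GM}$, and observe that the outcome is forced into $t^{-1}\C$; being simultaneously $t$-independent, it must vanish.

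First I would differentiate using compatibility of $\nabla^{GM}$ with the pairing (property (3)):
$$
\pa_c g_{ab}=\bracket{\nabla^{GM}_{\pa_c}X_a,\,X_b}_\E+\bracket{X_a,\,\nabla^{GM}_{\pa_c}X_b}_\E .
$$
The crux is the claim that $\nabla^{GM}_{\pa_c}X_a\in \mc H_-\cap t^{-1}\E$, i.e. that its component in $t\mc H_-\cap\E$ vanishes — equivalently, that the induced connection $\nabla$ is trivial in flat coordinates, $\nabla_{\pa_c}X_a=0$. To see this I would use the explicit semi-infinite period map $s=\Omega_0+\tfrac1t\sum_a\tau^a\Delta_a+O(\tau^2)$ with $O(\tau^2)\in t^{-1}\mc H_-$: since $\Omega_0$ and the $\Delta_a$ are flat and $s$ is a combination of flat sections with $\OO_\M$-coefficients, $\nabla^{GM}_{\pa_a}$ acts only on the coefficients, giving $X_a=\Delta_a+(\text{element of }\mc H_-)$ and therefore $\nabla^{GM}_{\pa_c}X_a\in\mc H_-$. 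Combined with the general property $\nabla^{GM}\E\subseteq t^{-1}\E$, this places $\nabla^{GM}_{\pa_c}X_a$ in $\mc H_-\cap t^{-1}\E$; uniqueness of the decomposition $\bracket{t\mc H_-\cap\E}\oplus\bracket{\mc H_-\cap t^{-1}\E}$ together with $\E_p\cap\mc H_-=0$ then kills the $t\mc H_-\cap\E$ component.

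Granting the claim, I would write $\nabla^{GM}_{\pa_c}X_a=\tfrac1t W$ with $W=t\nabla^{GM}_{\pa_c}X_a\in t\mc H_-\cap\E$, and similarly for $b$. Using property (2) to extract the scalar $t^{-1}$ (up to sign in the second slot) and then Corollary \ref{pairing} on the pairing of two elements of $t\mc H_-\cap\E$, each summand lies in $t^{-1}\C$, so $\pa_c g_{ab}\in t^{-1}\C$. But $\pa_c g_{ab}$ is $t$-independent by the first paragraph, and an element that is purely of $t$-degree $-1$ and also of $t$-degree $0$ is zero. Hence $\pa_c g_{ab}=0$ and $g_{ab}$ is constant.

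The main obstacle is precisely the claim $\nabla^{GM}_{\pa_c}X_a\in\mc H_-\cap t^{-1}\E$, i.e. the flatness of the coordinates: all the rest is formal $t$-degree bookkeeping through properties (2)--(3) and Corollary \ref{pairing}, whereas this step relies on the explicit structure of the semi-infinite period map and on the flatness of $\Omega_0$ and of the $\Delta_a$. I would also be careful that the identification $\pa_a s=\nabla^{GM}_{\pa_a}s$ is legitimate, i.e. that differentiating coefficients in the flat frame genuinely computes the Gauss--Manin derivative.
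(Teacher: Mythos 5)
Your proof is correct, but it takes a genuinely different route from the paper's. The paper never differentiates $g_{ab}$ at all: since $g_{ab}$ is $t$-independent by Corollary \ref{pairing}, it may be computed as the limit $t\to\infty$ of $\bracket{t\pa_a s, t\pa_b s}_\E$; in that limit every contribution involving $\mc H_-$ is suppressed (pairings of elements of $\mc H_-$ lie in $t^{-2}\C[t^{-1}]$, so cross terms sit in strictly negative powers of $t$), leaving $g_{ab}=\lim_{t\to\infty}\bracket{\Delta_a,\Delta_b}_\E$, which is constant on $\M$ because the pairing of flat sections is constant by property (3). You instead prove $\pa_c g_{ab}=0$ directly: property (3) reduces everything to your key claim $\nabla^{GM}_{\pa_c}\bracket{t\pa_a s}\in \mc H_-\cap t^{-1}\E$, equivalently $\nabla_{\pa_c}\pa_a=0$ for the induced connection, and then $t$-degree bookkeeping forces $\pa_c g_{ab}\in \C\cap t^{-1}\C=0$. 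Your justification of the claim is sound — it follows from $t\pa_a s\in \bracket{\Delta_a+\mc H_-}\cap\E$, flatness of the $\Delta_a$, the fact that $\nabla^{GM}$ maps $\E$ into $t^{-1}\E$, and directness of the decomposition $\bracket{t\mc H_-\cap\E}\oplus\bracket{\mc H_-\cap t^{-1}\E}$, which indeed rests on $\mc H_-\cap\E_p=0$. What your route buys is a stronger structural fact: the connection coefficients of $\nabla$ vanish identically in these coordinates, which is exactly the statement underlying the name \emph{flat coordinates} and which also immediately gives the paper's later observation that $t^2\pa_a\pa_b s\in t\mc H_-\cap\E$. What the paper's route buys is brevity: by passing to the $t\to\infty$ limit it never needs to control derivatives of the higher-order terms of $s$. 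Both arguments ultimately rest on the same two inputs, namely the expansion $t\pa_a s\in\bracket{\Delta_a+\mc H_-}\cap\E$ and the flatness of the basis $\fbracket{\Delta_a}$.
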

\begin{proof} Since $t\pa_a s, t\pa_b s\in t\mc H_-$ and $\Delta_a\in t\mc H_-$
$$
   g(t\pa_a s, t\pa_b s)=\lim_{t\to \infty}(t\pa_a s, t\pa_b s)_\E=\lim_{t\to \infty}(\Delta_a, \Delta_b)_\E
$$
which is now a constant since $\Delta_a, \Delta_b$ are flat.
\end{proof}
This lemma explains the name of "flat coordinates". On the other hand, since the leading term in $s$ is linear in $\tau$,
$$
  t^2\pa_a\pa_b s\in t\mc H_-\cap \E
$$
Since $\fbracket{t\pa_a s}$ forms  a basis of $t\mc H_-\cap \E$, there exists $A_{ab}^c(\tau)$ such that
$$
 t^2\pa_a\pa_b s= \sum_c A_{ab}^c t\pa_c s
$$
which describes the product in the Frobenius structure
$$
  \pa_a \circ \pa_b =\sum_c A_{ab}^c \pa_c
$$

\section{Calabi-Yau geometry}
In this section we discuss Barannikov's construction of semi-infinite variation of Hodge structures in Calabi-Yau geometry and Givental's interpretation of $J$-function in the B-model.

\subsection{Polyvector fields}

Let $X$ be a compact Calabi--Yau manifold of dimension $d$. Follow \cite{Barannikov-Kontsevich} we consider the space of polyvector f\/ields on $X$
\[
\PV(X)=\bigoplus_{0\leq i,j \leq d}\PV^{i,j}(X),\qquad
\PV^{i,j}(X)= \A^{0,j} \big(X, \wedge^i T_X\big).
\]
Here $T_X$ is the
holomorphic tangent bundle of $X$, and $\A^{0,j} (X, \wedge^i T_X) $
is the space of smooth $(0,j)$-forms valued in $\wedge^i T_X$.   $\PV(X)$ is a dif\/ferential bi-graded
commutative algebra; the dif\/ferential is the operator
\[
\bar\partial : \ \PV^{i,j} (X) \rightarrow \PV^{i,j+1} (X),
\]
and the algebra structure arises from wedging polyvector f\/ields. The
degree of elements of $\PV^{i,j}(X)$ is $i + j$. The graded-commutativity says that
\[
   \alpha\beta=(-1)^{|\alpha||\beta|}\beta\alpha,
\]
where $|\alpha|$, $|\beta|$ denote the degree of $\alpha$, $\beta$ respectively.

The Calabi--Yau condition implies that there exists a nowhere vanishing holomorphic volume form
\[
\Omega_X \in H^0(X, K_X),
\]
which is unique up to a multiplication by a constant. Let us f\/ix a choice of $\Omega_X$. It induces an isomorphism between the space of polyvector f\/ields and dif\/ferential forms
\begin{gather*}
  \PV^{i,j}(X)   \stackrel{\vdash \Omega_X}{\iso} \A^{d-i,j}(X), \qquad
    \alpha  \to\alpha \vdash \Omega_X,
\end{gather*}
where $\vdash$ is the contraction map.

The holomorphic de Rham dif\/ferential $\pa$ on dif\/ferential forms def\/ines an operator on polyvector f\/ields via the above isomorphism, which we still denote by
\[
\partial :  \ \PV^{i,j}(X) \to \PV^{i-1,j}(X),
\]
i.e.
\[
(\partial \alpha) \vdash \Omega_X \equiv  \partial ( \alpha \vdash \Omega_X), \qquad \alpha\in \PV(X).
\]
Obviously, the def\/inition of $\pa$ doesn't depend on the choice of $\Omega_X$. It induces a bracket on polyvector f\/ields
\[
  \fbracket{\alpha, \beta}=\pa\bracket{\alpha\beta}-\bracket{\pa\alpha}\beta-(-1)^{|\alpha|}\alpha\pa\beta,
\]
which associates $\PV(X)$ the structure of \emph{Batalin--Vilkovisky algebra}.

We def\/ine the \emph{trace map} $\Tr: \PV(X)\to \C$ by
\[
   \Tr(\alpha)=\int_X \bracket{\alpha\vdash \Omega_X}\wedge \Omega_X.
\]
Let $\abracket{-,-}$ be the induced pairing
\begin{gather*}
    \PV(X)\otimes \PV(X)  \to \C,\qquad
        \alpha\otimes \beta   \to \abracket{\alpha,\beta}\equiv \Tr\bracket{\alpha\beta}.
\end{gather*}
It's easy to see that $\bar\partial$ is skew self-adjoint for this pairing and $\pa$ is self-adjoint.

\subsection{Symplectic geometry and Lagrangian cone}\label{Calabi-Yau-symplectic}
We consider Givental's loop space formalism in the B-model. Let
$$
   S(X)=\PV(X)((t))[2]
$$
with the decomposition
$$
  S_+(X)=\PV(X)[[t]][2], \quad S_-(X)=t^{-1}\PV(X)[t^{-1}][2]
$$
Here $[2]$ is the conventional shifting of degree by $2$.  Let $Q=\dbar+t\pa$, and
$$
   \mc H=H^*(S(X), Q)
$$
There is a natural isomorphism
$$
   \Gamma_{\Omega_X}: \PV(X)((t))\stackrel{\iso}{\to} \A^{*,*}(X)((t))
$$
which sends $t^k \alpha^{i,j}\in t^k\PV^{i,j}(X)$ to
$$
   \Gamma_{\Omega_X}\bracket{t^k \alpha^{i,j}}=t^{k+i-1}\alpha^{i,j}\vdash \Omega_X
$$
Under $\Gamma_{\Omega_X}$, the operator $Q$ becomes the ordinary de Rham differential
$$
   \Gamma_{\Omega_X}(Q)= d
$$
and we can identify
$$
  \Gamma_{\Omega_X}: \mc H\to H^*(X, \C)((t))
$$
This will play the role of the flat structure. We define a pairing on $S(X)$
$$
  \bracket{-,-}: S(X)\times S(X)\to \C((t))
$$
by
$$
     \bracket{f(t)\alpha, g(t)\beta}=f(t)g(-t)\Tr\bracket{\alpha\beta}
$$
It's easy to see that it descends to a pairing on $\mc H$. This allows us to define the symplectic form on $\mc H$ via
$$
  \omega(\mu_1, \mu_2):= \Res_{t=0}\bracket{\mu_1, \mu_2}dt, \quad \mu_1, \mu_2\in \mc H
$$

Let $\tilde M$ be the space of gauge equivalent solutions of
$$
  \tilde M=\fbracket{\mu \in S_+(X)| Q\mu+{1\over 2}\fbracket{\mu, \mu}=0}/\sim
$$
associated to the differential graded Lie algebra $\fbracket{\S_+(X), Q, \fbracket{,}}$. There is a natural embedding
$$
  \tilde M\to \mc H: \quad \mu \to t-te^{\mu/t}
$$
in the sense of formal geometry via its functor of points on Artinian graded rings (Strictly speaking we should look at the formal derived stack associated to differential graded Lie algebra, and the above morphism corresponds to the morphism on Maurer-Cartan functors). We let $\L_X$ be the image of $\tilde M$.
\begin{prop} $\L_X$ is a formal Lagrangian sub-manifold of $\mc H$ around $0$. The tangent space at $\mu\in \L_X$ is given by
$$
T_\mu \L_X=\{\alpha e^{\mu/t}| \alpha\in H^*(S_+(X), Q+\fbracket{\mu,-})\}
$$
\end{prop}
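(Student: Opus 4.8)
The plan is to regard the embedding $\mu\mapsto t-te^{\mu/t}$ as a morphism of formal moduli and to establish the statement in four steps: well-definedness into $\mc H$, computation of the differential, isotropy, and maximality. First I would verify that $te^{\mu/t}$ is $Q$-closed whenever $\mu$ is a Maurer-Cartan element, so that its class in $\mc H=H^*(S(X),Q)$ is defined. Since $\dbar$ is a derivation while $\pa$ is the second order operator generating $\fbracket{-,-}$, one has $\pa e^{\mu/t}=\bracket{{\pa\mu\over t}+{1\over 2t^2}\fbracket{\mu,\mu}}e^{\mu/t}$, whence
$$Q\bracket{e^{\mu/t}}=\bracket{{1\over t}\bracket{\dbar\mu+{1\over 2}\fbracket{\mu,\mu}}+\pa\mu}e^{\mu/t}.$$
The Maurer-Cartan equation $\dbar\mu+t\pa\mu+{1\over 2}\fbracket{\mu,\mu}=0$ forces the coefficient to vanish, so $Qe^{\mu/t}=0$; after restricting to Artinian parameters these exponentials are finite sums and lie in $\PV(X)((t))$, as required by the functor-of-points description.

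Next I would compute the differential. Differentiating $t-te^{\mu/t}$ in a direction $\alpha$ and using graded-commutativity of $\PV(X)$ produces tangent vectors of the form $\alpha e^{\mu/t}$. Repeating the computation above with a linear insertion gives
$$Q\bracket{\alpha e^{\mu/t}}=\bracket{\bracket{Q+\fbracket{\mu,-}}\alpha}e^{\mu/t},$$
the Maurer-Cartan equation again annihilating the terms proportional to $e^{\mu/t}$ with no $\alpha$. Hence $\alpha e^{\mu/t}$ is $Q$-closed exactly when $\alpha$ is closed for the twisted differential $Q+\fbracket{\mu,-}$, and $Q$-exact exactly when $\alpha$ is, so the differential identifies $T_\mu\L_X$ with $\fbracket{\alpha e^{\mu/t}\mid \alpha\in H^*(S_+(X),Q+\fbracket{\mu,-})}$ as claimed.

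For isotropy I would evaluate the pairing on two such tangent vectors. Because the second argument of $\bracket{-,-}$ carries $t\mapsto-t$, the two exponentials combine into $e^{(\mu(t)-\mu(-t))/t}$; since $\mu(t)-\mu(-t)$ is divisible by $t$, this factor lies in $\PV(X)[[t]]$, and as $\alpha,\beta\in S_+(X)$ the whole product $\alpha(t)\beta(-t)e^{(\mu(t)-\mu(-t))/t}$ involves only non-negative powers of $t$. Therefore $\bracket{\alpha e^{\mu/t},\beta e^{\mu/t}}=\Tr\bracket{\alpha(t)\beta(-t)e^{(\mu(t)-\mu(-t))/t}}\in\C[[t]]$ has vanishing residue, so $\omega$ restricts to zero on $T_\mu\L_X$.

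Finally, to upgrade isotropy to the Lagrangian property I would reduce to the base point, where $T_0\L_X$ is the image of $H^*(S_+(X),Q)$. The $\pa\dbar$-lemma identifies this with $\bracket{\bigoplus_{i,j}H^j(X,\wedge^i T_X)}[[t]]$, the harmonic polyvector fields with non-negative $t$-powers; under the residue pairing $t^a$ meets only $t^{-1-a}$ through Serre duality, so this positive half is its own orthogonal complement and is Lagrangian. To propagate to all $\mu$ I would combine isotropy everywhere with constancy of $\dim H^*(S_+(X),Q+\fbracket{\mu,-})$ along $\L_X$, a half-dimensional isotropic subspace being automatically Lagrangian. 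I expect this last point to be the main obstacle: the constancy of the tangent dimension is not formal but rests on the unobstructedness of Calabi-Yau deformations (Tian-Todorov), equivalently on the formality of the \BV algebra supplied by the $\pa\dbar$-lemma. By contrast, the hands-on \BV computation of the first step is routine once the identity for $\pa e^{\mu/t}$ is in place.
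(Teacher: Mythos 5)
The paper offers no proof of this proposition at all --- it is stated bare, being imported from Barannikov \cite{Barannikov-quantum} and Givental's loop-space formalism --- so there is nothing internal to compare your argument against; it has to stand on its own, and it essentially does. Your steps are the standard ones and are executed correctly: the second-order (BV) identity $\pa e^{\mu/t}=\bracket{\pa\mu/t+\fbracket{\mu,\mu}/2t^2}e^{\mu/t}$ yields both the $Q$-closedness of $t-te^{\mu/t}$ on Maurer--Cartan elements and the intertwining $Q\bracket{\alpha e^{\mu/t}}=\bracket{(Q+\fbracket{\mu,-})\alpha}e^{\mu/t}$, hence the description of tangent vectors; the sesquilinearity of the pairing gives $\bracket{\alpha e^{\mu/t},\beta e^{\mu/t}}=\Tr\bracket{\alpha(t)\beta(-t)e^{(\mu(t)-\mu(-t))/t}}\in\C[[t]]$, which kills the residue and proves isotropy; and at the base point Hodge theory plus Serre duality exhibit $\E_0=\HH[[t]]$ as its own $\omega$-orthogonal complement inside $\HH((t))$.

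Two points deserve flagging, neither fatal. First, your claim that $\alpha e^{\mu/t}$ is $Q$-exact ``exactly when'' $\alpha$ is $(Q+\fbracket{\mu,-})$-exact is an overstatement: dividing a primitive by $e^{\mu/t}$ only produces a primitive in $S(X)$, not in $S_+(X)$, so the converse direction secretly requires the injectivity of $H^*(S_+(X),Q+\fbracket{\mu,-})\to\mc H$, i.e.\ the same degeneration input as your final step. Fortunately only the easy direction is needed for the set-equality asserted in the proposition. Second, your closing step is where all the substance lies, and you locate its source correctly (Tian--Todorov/Barannikov--Kontsevich unobstructedness via the $\pa\dbar$-lemma); but note that ``a half-dimensional isotropic subspace is automatically Lagrangian'' is finite-dimensional language that does not literally apply to $\HH((t))$. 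The clean mechanism in this formal setting is rather that $\L_X$, being formally smooth by unobstructedness, has flat tangent sheaf over each Artinian base, so maximality of an isotropic subspace can be checked after reduction modulo the maximal ideal, i.e.\ at $\mu=0$, where your Serre-duality argument applies. With that substitution your proof is complete, and it is considerably more detailed than anything the paper itself records.
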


We define
$$
  \E_0= H^*(S_+(X), Q)
$$
Then $\E_0$ is the tangent space of $\L_X$ at $\mu=0$.

\subsection{Semi-infinite variation of Hodge structures}
 Let $\mc H_-\subset \mc H$ be a choice of polarization. It defines the decomposition
$$
  \mc H=\mc H_-\oplus \E_0
$$
such that $t^{-1}: \mc H_-\to \mc H_-$. Let $\pi_+: \mc H\to \E_0$ be the associated projection. 

\begin{prop}[\cite{Barannikov-quantum}] $\mc L_X\cap t \mc H_-$ is a smooth formal scheme around $\mu=0$ such that
$$
    T_{0}\bracket{\mc L_X\cap t \mc H_-}=\mc E_0\cap t\mc H_-
$$
\end{prop}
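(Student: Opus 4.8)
The plan is to realize $\mc L_X\cap t\mc H_-$ as the image of a linear subspace under the inverse of a projection, thereby reducing the entire statement to linear algebra on the polarization. Write $\pi_+:\mc H\to\mc E_0$ for the projection onto $\mc E_0$ along $\mc H_-$ coming from the splitting $\mc H=\mc H_-\oplus\mc E_0$, and let $\pi_-=1-\pi_+$ be the complementary projection onto $\mc H_-$. The key reduction is the claim that $\pi_+$ restricts to a formal isomorphism $\pi_+:\mc L_X\xrightarrow{\sim}\mc E_0$ near $\mu=0$; granting this, the condition of lying in $t\mc H_-$ becomes linear and the conclusion follows immediately.

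First I would establish this isomorphism. By the previous proposition $\mc L_X$ is a smooth formal Lagrangian with $T_0\mc L_X=\mc E_0$, and $\pi_+$ restricted to $\mc E_0$ is the identity, since it is the projection onto the $\mc E_0$-summand. Hence the differential of $\pi_+|_{\mc L_X}$ at the base point, namely $\pi_+|_{T_0\mc L_X}=\pi_+|_{\mc E_0}=\mathrm{id}_{\mc E_0}$, is an isomorphism, and the formal inverse function theorem (applied to the functor of points on Artinian graded rings, in the same formal sense in which $\mc L_X$ was defined) produces a formal inverse $\sigma:\mc E_0\to\mc L_X$ with $\pi_+\circ\sigma=\mathrm{id}$ and $d\sigma_0=\mathrm{id}_{\mc E_0}$. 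I expect this to be the main obstacle, not because the differential computation is hard, but because one must phrase transversality and the inverse function theorem rigorously in the formal (derived) setting in which $\mc L_X$ lives. The only geometric input required, however, is transversality of the polarization at the single point $0$, i.e. $\mc E_0\cap\mc H_-=0$, which holds by assumption.

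It then remains to analyze the linear condition. Since $t^{-1}\mc H_-\subseteq\mc H_-$ forces $\mc H_-\subseteq t\mc H_-$, decomposing an arbitrary element of $t\mc H_-$ through $\mc H=\mc H_-\oplus\mc E_0$ shows $t\mc H_-=\mc H_-\oplus(\mc E_0\cap t\mc H_-)$, the $\mc E_0$-component of any such element automatically lying in $\mc E_0\cap t\mc H_-$. Now for $e\in\mc E_0$ I would write $\sigma(e)=\pi_-\sigma(e)+e$, where $\pi_-\sigma(e)\in\mc H_-\subseteq t\mc H_-$ and the $\mc E_0$-component equals $\pi_+\sigma(e)=e$. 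By the splitting of $t\mc H_-$ this means $\sigma(e)\in t\mc H_-$ if and only if $e\in\mc E_0\cap t\mc H_-$. Consequently $\mc L_X\cap t\mc H_-=\sigma(\mc E_0\cap t\mc H_-)$ is the image of a linear subspace under the formal isomorphism $\sigma$, hence a smooth formal scheme, and its tangent space at $0$ is $d\sigma_0(\mc E_0\cap t\mc H_-)=\mc E_0\cap t\mc H_-$, as claimed. Since $\pi_+$ and $\sigma$ are mutually inverse, this argument also exhibits $\pi_+:\mc L_X\cap t\mc H_-\to\mc E_0\cap t\mc H_-$ as a formal isomorphism, which is the form in which the result is subsequently used.
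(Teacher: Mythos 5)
Your proof is correct, but there is nothing in the paper to compare it against: the paper states this proposition with a citation to Barannikov and gives no argument of its own, so your write-up supplies a proof the paper omits. Your route is a clean reduction to two ingredients: (i) the formal inverse function theorem applied to $\pi_+|_{\L_X}:\L_X\to\E_0$, whose differential at $0$ is $\pi_+|_{T_0\L_X}=\pi_+|_{\E_0}=\mathrm{id}$ by the preceding proposition ($T_0\L_X=\E_0$); and (ii) the purely linear observation that $t^{-1}\mc H_-\subseteq\mc H_-$ gives $\mc H_-\subseteq t\mc H_-$, hence $t\mc H_-=\mc H_-\oplus\bracket{\E_0\cap t\mc H_-}=\pi_+^{-1}\bracket{\E_0\cap t\mc H_-}$, so that $\L_X\cap t\mc H_-=\bracket{\pi_+|_{\L_X}}^{-1}\bracket{\E_0\cap t\mc H_-}$ is the preimage of a linear subspace under a formal isomorphism. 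Both steps are sound (the $R$-point version of the linear-algebra step, for Artinian graded $R$, goes through verbatim), and the only genuinely nontrivial input, smoothness of $\L_X$ with $T_0\L_X=\E_0$, is exactly what the paper's previous proposition provides. One structural remark: the paper asserts, immediately after the proposition, that ``it follows that'' $\pi_+:\L_X\to\E_0$ is an isomorphism, whereas your argument runs the logic in the opposite (and arguably more natural) direction — the isomorphism $\pi_+|_{\L_X}$ is established first, from smoothness and the tangent space computation alone, and the statement about $\L_X\cap t\mc H_-$ is then a corollary. Your proof therefore also yields, as you note, the isomorphism $\pi_+:\L_X\cap t\mc H_-\to\E_0\cap t\mc H_-$ that the paper actually uses later when identifying the $J$-function via $\pi_0$.
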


It follows that the natural projection on the Lagrangian $\pi_+: \mc L_X\to \E_0$ is an isomorphism.  We follow Barannikov \cite{Barannikov-quantum} to associate a ${\infty\over 2}$-VHS on $\mc E_0\cap t\mc H_-$, which can be identified with the extended deformation space of $X$ \cite{Barannikov-Kontsevich}. We consider the locally free sheaf $\mc E$ on $\mc E_0\cap t\mc H_-$, whose fiber over $\mu\in \mc E_0\cap t\mc H_-$ is
$$
   \mc E_\mu=T_{\pi_+^{-1}(\mu)} \mc L_X\subset \mc H
$$
It becomes a varying family of Lagrangian subspaces in $\mc H$.  Let $\nabla^{GM}$ be the trivial connection on $\mc H$. It's easy to see that
$$
  \nabla^{GM}: \mc E\to \Omega^1_{\mc E_0\cap t\mc H_-}\otimes t^{-1}\mc E
$$

\begin{prop}[\cite{Barannikov-quantum}] $\fbracket{\mc E_0\cap t\mc H_-, \E, \nabla^{GM}, g=\bracket{-,-}}$ defines a miniversal ${\infty\over 2}$-VHS on $\mc E_0\cap t\mc H_-$.
\end{prop}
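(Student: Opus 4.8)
The plan is to verify directly the four defining conditions of a ${\infty\over 2}$-VHS together with the miniversality clause, taking as input the structural facts already in place: that $\E_\mu=T_{\pi_+^{-1}(\mu)}\L_X$ is a smooth family of Lagrangian subspaces of $\mc H$, that each $\E_\mu$ is stable under multiplication by $t$ (so that $\E$ is genuinely an $\OO_\M[[t]]$-module, with $\M=\E_0\cap t\mc H_-$), and the explicit tangent-space description $\fbracket{\alpha\, e^{\mu/t}\mid \alpha\in H^*(S_+(X),Q+\fbracket{\mu,-})}$. The $t$-stability is itself worth recording: $Q=\dbar+t\pa$ and the bracket are $\C[[t]]$-linear, so $t$ acts on $H^*(S_+(X),Q+\fbracket{\mu,-})$, and since multiplication by $t$ commutes with $e^{\mu/t}$ the fibers are $\C[[t]]$-modules.

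Conditions (1) and (2) are formal consequences of the definition $\bracket{f(t)\alpha,g(t)\beta}=f(t)g(-t)\Tr(\alpha\beta)$: condition (2) is immediate, and condition (1) follows from the graded symmetry $\Tr(\alpha\beta)=(-1)^{|\alpha||\beta|}\Tr(\beta\alpha)$ of the trace --- a consequence of the graded-commutativity of the wedge product and of $\int_X$ --- together with the sign produced by the substitution $t\mapsto -t$ and the degree shift $[2]$. Condition (3) is automatic: $\nabla^{GM}$ is the trivial connection on the constant bundle $\mc H$ and $\bracket{-,-}$ is the restriction of a fixed, point-independent pairing, so the Leibniz identity holds tautologically. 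The transversality $\nabla^{GM}\colon \E\to \Omega^1_\M\otimes t^{-1}\E$ also comes from the exponential factor: applying $\nabla^{GM}_V=\pa_V$ to a section $\alpha\, e^{\mu/t}$ produces $(\pa_V\alpha)\,e^{\mu/t}+t^{-1}(\pa_V\mu)\,\alpha\, e^{\mu/t}$, whose only negative power of $t$ is the single $t^{-1}$ in the second term.

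For the non-degeneracy condition (4) I would work at the base point, where $\E/t\E\iso H^*(S_+(X),Q)/t\,H^*(S_+(X),Q)\iso H^*(\PV(X),\dbar)$. Under $\vdash\Omega_X$ this is the Hodge cohomology $\bigoplus_{p,q}H^q(X,\Omega^p_X)$, and $\Tr(\alpha\beta)=\int_X(\alpha\vdash\Omega_X)\wedge\Omega_X$ is, up to the fixed normalization by $\Omega_X$, the cup-product pairing. Since $\Tr$ is supported on $\PV^{d,d}(X)$, it pairs $H^q(X,\wedge^pT_X)$ with $H^{d-q}(X,\wedge^{d-p}T_X)$, so non-degeneracy is exactly Serre--Poincar\'e duality on the compact \Kahler manifold $X$.

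The essential point is miniversality, and here I would follow the semi-infinite period map construction above. One takes the distinguished class $\Omega_0=\mathbf{1}\in t\mc H_-$ given by the unit polyvector field (equivalently, via $\Gamma_{\Omega_X}$, the volume form $\Omega_X$) and lets $s=\bracket{\Omega_0+\mc H_-}\cap\E$ be the induced section, so that $s(0)=\mathbf{1}$. Miniversality then asks that $X\mapsto t\nabla^{GM}_X s \bmod t\E$ be an isomorphism $T_\M\to\E/t\E$, and this I would deduce from the preceding proposition: $\L_X\cap t\mc H_-$ is a smooth formal scheme with $T_0\bracket{\L_X\cap t\mc H_-}=\E_0\cap t\mc H_-$ and $\pi_+\colon\L_X\to\E_0$ an isomorphism. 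Writing $s=\Omega_0+\tfrac1t\sum_a\tau^a\Delta_a+O(\tau^2)$ in flat coordinates one has $t\pa_a s\equiv \Delta_a \bmod \mc H_-$, where the $\fbracket{\Delta_a}$ form a basis of $\E_0\cap t\mc H_-$; under the identification $\E/t\E\iso\E_0\cap t\mc H_-$ this makes the derivative the identity at the base point, hence an isomorphism of $\OO_\M$-modules. I expect the main obstacle to be precisely this bookkeeping --- tracking the powers of $t$ and the shift $[2]$ through the factor $e^{\mu/t}$ so that the leading ($t^0$) term of $t\nabla^{GM}_X s$ is exactly the tangent vector $X$. Once this is pinned down, the remaining Frobenius data $\fbracket{\nabla,A,g}$ are furnished by the general machinery developed earlier.
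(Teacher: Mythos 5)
The paper itself offers no proof of this proposition: it is quoted from Barannikov's work, and the surrounding text merely assembles the ingredients (the sheaf $\E$, the trivial connection $\nabla^{GM}$, the pairing). So your attempt can only be measured against the natural direct verification, and in outline it is that verification, carried out correctly: conditions (1)--(3) are formal consequences of the definition $\bracket{f(t)\alpha,g(t)\beta}=f(t)g(-t)\Tr(\alpha\beta)$ and of $\nabla^{GM}$ being the restriction of the constant connection on $\mc H$, and condition (4) reduces, over a formal base, to non-degeneracy at the closed point, which is Serre duality on $\bigoplus_{p,q}H^q(X,\wedge^pT_X)$ exactly as you say.

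Three points need more care. First, the identification $\E_0/t\E_0\iso H^*(\PV(X),\dbar)$, and equally the local freeness and finite rank of $\E$ over $\OO_\M[[t]]$, are not formal: they require that $H^*\bracket{\PV(X)[[t]],\dbar+t\pa+\fbracket{\mu,-}}$ be a free $\C[[t]]$-module of rank independent of $\mu$, which is the Hodge--de Rham degeneration statement ($\pa\dbar$-lemma, Tian--Todorov/Barannikov--Kontsevich). This is where the compact Calabi--Yau hypothesis genuinely enters, and it is the content hidden in your phrase ``structural facts already in place.'' Second, in the transversality check the Leibniz expansion $(\pa_V\alpha)e^{\mu/t}+t^{-1}(\pa_V\mu)\alpha e^{\mu/t}$ does not by itself show the result lies in $t^{-1}\E$: one must verify that $t\pa_V\alpha+(\pa_V\mu)\alpha$ is closed for $Q+\fbracket{\mu,-}$, which follows from differentiating the Maurer--Cartan equation against the closedness of $\alpha$; it is a computation, not a tautology. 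Third, and most substantively, your miniversality argument is tied to the conjugate (harmonic) polarization: only there is $\Omega_0=\mathbf 1$ known to lie in $t\mc H_-$, and only there do you have the universal Maurer--Cartan solution $\tilde\mu=\tau+O(\tau^2)$ parametrized by $\HH$. The proposition is stated for an arbitrary polarization. The fix is cheap and polarization-free: the definition of miniversality only asks for \emph{some} section, so take the tautological section $s(\mu)=e^{\nu(\mu)/t}$, where $\nu(\mu)$ is the Maurer--Cartan element with $\pi_+\bracket{t-te^{\nu(\mu)/t}}=\mu$. Then $t\nabla^{GM}s|_0$ is, up to sign, the inclusion $\E_0\cap t\mc H_-\into\E_0$ followed by the quotient $\E_0\to\E_0/t\E_0$, and this composite is an isomorphism for \emph{every} polarization: injective because $v\in t\E_0$ forces $t^{-1}v\in\E_0\cap\mc H_-=0$, and surjective because writing $t^{-1}w=h+e$ with $h\in\mc H_-$, $e\in\E_0$ gives $th\in\E_0\cap t\mc H_-$ with $th\equiv w\bmod t\E_0$. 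With these repairs your proof is complete, and it is, as far as one can tell, the same verification Barannikov's cited paper carries out.
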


To find the Frobenius structure, we consider the intersection \cite{Givental-Frobenius}
$$
   J=t\mc H_-\cap \mc L_X
$$
which is \emph{Givental's J-function}. Let
$$
  \pi_0: t \H_-\to \E_0\cap t\H_-
$$
be the projection to the component of $\E_0\cap t\mc H_-$, then
$$
  \pi_0: J\to \E_0\cap t\H_-
$$
is an isomorphism. In this way we view $J$ as defining a varying section of $t\H_-$ parametrized by $\E_0\cap t\H_-$. Note that
$$
   1-J/t \in \bracket{1+\mc H_-}\cap \E
$$
as a section defined on $\E_0\cap t\H_-$, and $J$ plays the role of semi-infinite period map. It gives an isomorphism
$$
    \nabla^{GM}: T_{\mc E_0\cap t\mc H_-}\to t\mc H_- \cap \E, \quad V\to \nabla^{GM}_V J
$$
which implies that there induces a Frobenius structure on $\mc E_0\cap t\mc H_-$, and the linear coordinates on $ \E_0\cap t\mc H_-$ are precisely the flat coordinates.

\subsection{Potential}
The potential function can be expressed in term of $\mc L_X$. Let's choose a basis $\fbracket{\Delta_a}$ of $t\H_-\cap \E_0$, and $\fbracket{\tau^a}$ be the dual flat linear coordinates.  We adopt the same notation in the previous section. The Frobenius structure in flat coordinates is described by the equation
$$
   t\pa_a\pa_b J=A_{ab}^c\pa_c J
$$
and we let
$$
A_{abc}=A_{ab}^d g_{dc}
$$
where $g_{ab}=\bracket{t\pa_a s, t\pa_b s}$ is the metric in flat coordinates.

The splitting $\mc H=\mc H_-\oplus \E_0$ gives a natural identification
$$
  \mc H= T^*(\E_0)
$$
Let $\F_0\in \OO(\mc E_0)$ be the generating function for the Lagrangian cone $\mc L_X$
$$
   \mc L_X=\mbox{Graph}\bracket{d\F_0}
$$
and we let
$$
f_0=\F_0|_{\mc E_0\cap t\mc H_-}
$$
be the restriction to $\mc E_0\cap t\mc H_-$. The following interpretation is due to Givental \cite{Givental-Frobenius}
\begin{prop}
$f_0$ gives the potential function for the Frobenius structure, i.e.
$$
   A_{abc}=\pa_a\pa_b\pa_c f_0
$$
\end{prop}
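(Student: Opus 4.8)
The plan is to route everything through the symplectic form $\omega$ and Givental's $J$-function, and then feed in the structure equation $t\pa_a\pa_bJ=A_{ab}^c\pa_cJ$. First I would use the polarization $\mc H=\mc H_-\oplus\E_0$, which identifies $\mc H=T^*(\E_0)$, to compute $df_0$. Since $\pi_+\colon\mc L_X\to\E_0$ is an isomorphism and $\mc L_X=\mbox{Graph}(d\F_0)$, the canonical Liouville one-form on $T^*(\E_0)$ restricts on $\mc L_X$ to $d\F_0$. For $J\in t\mc H_-\cap\mc L_X$ the base component $\pi_+(J)$ agrees with $\pi_0(J)=\sum_a\tau^a\Delta_a$, so writing $J=\sum_a\tau^a\Delta_a+(J)_-$ with $(J)_-\in\mc H_-$ and pulling the Liouville form back along $\tau\mapsto J(\tau)$ yields $df_0$. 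Under the duality $\mc H_-\cong\E_0^*$ furnished by $\omega$ this reads $\pa_af_0=\omega\bracket{(J)_-,\Delta_a}$; and since $\E_0$ is Lagrangian its $\E_0$-part pairs to zero with $\Delta_a\in\E_0$, so $\pa_af_0=\omega\bracket{J,\Delta_a}$.

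Next I would differentiate twice more. As $\Delta_c$ is constant in the flat coordinates, $\pa_a\pa_b\pa_cf_0=\omega\bracket{\pa_a\pa_bJ,\Delta_c}$. The isomorphism $V\mapsto\nabla^{GM}_VJ$ onto $t\mc H_-\cap\E$ shows $\pa_cJ\in t\mc H_-\cap\E$, so the structure equation forces $t\pa_a\pa_bJ\in t\mc H_-\cap\E$ and hence $\pa_a\pa_bJ\in\mc H_-$. Writing $\Delta_c=\pa_cJ-(\pa_cJ)_-$ with $(\pa_cJ)_-\in\mc H_-$, the correction term $\omega\bracket{\pa_a\pa_bJ,(\pa_cJ)_-}$ vanishes because both arguments lie in the Lagrangian $\mc H_-$. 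Therefore $\pa_a\pa_b\pa_cf_0=\omega\bracket{\pa_a\pa_bJ,\pa_cJ}$.

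Finally I would evaluate this residue pairing using the structure equation. From $t\pa_a\pa_bJ=A_{ab}^d\pa_dJ$ one has $\pa_a\pa_bJ={1\over t}A_{ab}^d\pa_dJ$ with $A_{ab}^d$ independent of $t$, so pulling the scalar factor ${1\over t}$ out of the first slot of the pairing gives
$$
\omega\bracket{\pa_a\pa_bJ,\pa_cJ}=\Res_{t=0}{1\over t}A_{ab}^d\bracket{\pa_dJ,\pa_cJ}dt.
$$
By Corollary \ref{pairing} the metric $g_{dc}=\bracket{\pa_dJ,\pa_cJ}$ is a constant in $t$, so the residue extracts $A_{ab}^dg_{dc}=A_{abc}$, which is exactly the identity $\pa_a\pa_b\pa_cf_0=A_{abc}$.

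The main obstacle is the first step: one must verify that the cotangent identification $\mc H=T^*(\E_0)$ coming from the polarization is genuinely compatible with the flat-coordinate description of $J$ — that the base component $\pi_+(J(\tau))$ is precisely $\sum_a\tau^a\Delta_a$ and that the restriction of the Liouville form reproduces $df_0$, with the correct sign for the duality $\mc H_-\cong\E_0^*$ induced by $\omega$. Once these conventions are pinned down, the remaining two steps are purely formal consequences of the structure equation, the Lagrangian property of $\mc H_-$, and the residue definition of $\omega$.
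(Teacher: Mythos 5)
Your proposal is correct and follows essentially the same route as the paper: identify $\pa_a f_0$ with the residue pairing of $J$ (modulo its $\E_0$-component, which pairs trivially with $\Delta_a$) against $\Delta_a$, differentiate twice, trade $\Delta_c$ for $\pa_c J$ at the cost of a term that vanishes, and finish with the structure equation $t\pa_a\pa_b J = A_{ab}^d\pa_d J$ together with the $t$-constancy of $g_{dc}$ from Corollary \ref{pairing}. The only cosmetic difference is that you justify the discarded terms via the isotropy of the Lagrangians $\E_0$ and $\mc H_-$, whereas the paper invokes the pairing-degree estimates of Corollary \ref{pairing}; both arguments are valid and interchangeable here.
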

\begin{proof} We write
$$
  J=\tau+ B(\tau)+C(\tau)
$$
where $\tau=\sum_a \tau^a\Delta_a$, $B(\tau)\in \mc H_-\cap t^{-1}\E_0$, $C(\tau)\in t^{-1}\mc H_-$. $B(\tau), C(\tau)$ are higher order terms in $\tau$. By the definition of $f_0$ and $J$,
\begin{align*}
  \pa_a f_0&=\bracket{B(\tau), \Delta_a}=\Res_{t=0}\bracket{J-\tau, \Delta_a}dt
\end{align*}
where we use the fact from Corollary \ref{pairing} that the pairing
$$
  \bracket{-,-}: \bracket{t^k\mc H_-\cap t^{k-1}\E_0}\otimes  \bracket{t^m\mc H_-\cap t^{m-1}\E_0}\to \C t^{k+m-2}
$$
It follows that
\begin{align*}
  \pa_a\pa_b f_0=\Res_{t=0}\bracket{\pa_aJ-\Delta_a, \Delta_b}dt
\end{align*}

Therefore
\begin{align*}
  \pa_a\pa_b\pa_c f_0=\Res_{t=0}\bracket{\pa_a\pa_bJ, \Delta_c}dt=\Res_{t=0}\bracket{\pa_a\pa_bJ, \pa_cJ}dt=A_{abc}
\end{align*}
\end{proof}

\subsubsection{The polarizations}
We give some remarks on the polarizations. Under the isomorphism $\Gamma_{\Omega_X}$ in Section \ref{Calabi-Yau-symplectic},
$$
   \Gamma_{\Omega_X}\bracket{\E_0}=\sum_{p}t^{d-p+1}F^p H^*(X, \C)\subset H^*(X, \C)((t))
$$
corresponds to the Hodge filtration. A choice of polarization is equivalent to a filtration on $H^*(X, \C)$ which splits the Hodge filtration.

A natural choice is given by the complex conjugate filtration. If we choose a K\"{a}hler metric, and let
$$
  \HH\subset \PV(X)
$$
be the space of harmonic polyvector fields, then
$$
  \mc H\iso \HH((t))
$$
via the harmonic projection, and the decomposition of $\mc H$ is given by
$$
   \E_0=\HH[[t]], \quad \H_-=t^{-1}\HH[t^{-1}]
$$

The $J$-function can be obtained as follows. Let $\tilde \mu$ be the $\HH$-parametrized universal solution of the Maurer-Cartan equation
$$
    \dbar \tilde\mu+{1\over 2}\fbracket{\tilde\mu,\tilde\mu}=0, \quad \tilde\mu \in \PV(X)\otimes \OO(\HH)
$$
subject to the constraint $\pa \tilde \mu=0$.  The existence is proved in \cite{Todorov-CY, Barannikov-Kontsevich}. The solution is given by power series
$$
   \tilde\mu= \tau+ \mbox{higher order terms}
$$
where $\tau$ parametrizes $\HH$. Then we see that as formal manifold
$$
   J=t-t e^{\tilde \mu/t}\in t\H_-
$$
is the $J$-function with respect to the complex conjugate filtration, and the map
$$
  \pi_0: J\to \HH,  t-t e^{\tilde \mu/t}\to \tau
$$

Another choice relevant to mirror symmetry is the monodromy splitting filtration around the large complex limit over the moduli space. Different choices of polarizations will be related via a change of coordinates \cite{Barannikov-thesis}.

\section{Kodaira-Spencer gauge theory } We now give the gauge theoretical description of the structure of semi-infinite variation of Hodge structure on Calabi-Yau manifold. This is first discovered in physics by Bershadsky, Cecotti, Ooguri and Vafa \cite{BCOV} known as the \emph{Kodaira-Spencer gauge theory} and the mathematical aspect is developed in \cite{Si-Kevin, Li-thesis, Si-mirror}. A finite dimensional toy model has also appeared in \cite{Losev-Shadrin, Shadrin-BCOV}. See \cite{Si-survey} also for a short introduction.

\subsection{Lagrangian cone and BCOV action}
We consider the Lagrangian cone in the previous section at the chain level. Let
$$
   \widehat{\L}_X=\fbracket{t-te^{\mu/t}|\mu\in S_+(X)}\subset S(X)
$$
be a submanifold of $S(X)$ in the sense of formal geometry \cite{Si-Kevin}. $S(X)$ is a formal symplectic space with symplectic form $\omega$, and $\widehat{\L}_X$ is a Lagrangian such that the dilaton shift \cite{Givental-Frobenius} $\widehat{\L}_X-t$ is a cone. The decomposition
$$
  S(X)=S_+(X)\oplus S_-(X)
$$
gives a formal identification
$$
  S(X)=T^*(S_+(X))
$$

\begin{defn}
The classical BCOV action $S^{BCOV}$ is a formal functional on $S_+(X)$ defined to be the generating functional of $\WL_X$, i.e.
$$
   \WL_X=\mbox{Graph}(dS^{BCOV})
$$
\end{defn}

\begin{prop}[\cite{Si-Kevin}]
\[
    S^{BCOV}(\mu)=\Tr \abracket{e^{\mu}}_0,
\]
where $\abracket{-}_0: \Sym(\PV(X)[[t]])\to \PV(X)$ is the map given by intersection of $\psi$-classes over the moduli space of marked rational curves
\[
    \big\langle\alpha_1 t^{k_1},\dots, \alpha_n t^{k_n}\big\rangle_0=\alpha_1\cdots \alpha_n \int_{\overline{M}_{0,n}}\psi_1^{k_1}\cdots\psi_n^{k_n}=\binom{n-3}{k_1,\dots,k_n}\alpha_1\cdots \alpha_n.
\]
\end{prop}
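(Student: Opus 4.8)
The plan is to unwind the definition of $S^{BCOV}$ as the generating functional of $\WL_X$ and to verify the formula by checking the graph condition order by order. Under the identification $S(X)=T^*(S_+(X))$ coming from $S(X)=S_+(X)\oplus S_-(X)$, the equation $\WL_X=\mbox{Graph}(dS^{BCOV})$ says that for a point of $\WL_X$, written as $t-te^{\nu/t}=q(\nu)+P(\nu)$ with $q(\nu)\in S_+(X)$ and $P(\nu)\in S_-(X)$, the base component $q(\nu)$ is the argument $\mu$ of $S^{BCOV}$ and the fibre component $P(\nu)$ equals $d_\mu S^{BCOV}$ under the symplectic pairing $\omega$. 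The first step is therefore to expand
$$
   t-te^{\nu/t}=-\sum_{n\geq 1}\frac{1}{n!}\nu^n t^{1-n},
$$
where $\nu=\sum_{k\geq 0}\nu_k t^k$, split this into its $S_{\pm}(X)$ parts according to the sign of the power of $t$, and record the relation $d_\mu S^{BCOV}=P(\nu)$ with $\mu=q(\nu)$.

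Next I would compute the fibre component through the residue pairing $\omega(\alpha,\beta)=\Res_{t=0}\bracket{\alpha,\beta}dt$. Projecting $e^{\nu/t}$ onto $S_-(X)$ and pairing produces sums of traces $\Tr(\nu_{k_1}\cdots\nu_{k_n})$ weighted by the reciprocal factorials $1/n!$, the signs coming from $t\mapsto -t$, and the constraint on the power of $t$ forced by taking residues. The content of the proposition is that, once these weights are expressed back in the symplectic base coordinate $\mu$ rather than in the exponential parameter $\nu$, they organise precisely into the multinomials $\binom{n-3}{k_1,\dots,k_n}$, with the selection rule $\sum_i k_i=n-3=\dim\overline{M}_{0,n}$ emerging as the residue condition. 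Concretely this yields $S^{BCOV}(\mu)=\Tr\abracket{e^\mu}_0$ with $\abracket{e^\mu}_0=\sum_n\frac{1}{n!}\abracket{\mu,\dots,\mu}_0$.

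It then remains to identify $\binom{n-3}{k_1,\dots,k_n}$ with $\int_{\overline{M}_{0,n}}\psi_1^{k_1}\cdots\psi_n^{k_n}$. The cleanest route is the string equation: pulling back along the forgetful map $\overline{M}_{0,n+1}\to\overline{M}_{0,n}$ gives
$$
   \int_{\overline{M}_{0,n+1}}\psi_1^{k_1}\cdots\psi_n^{k_n}=\sum_{j:\,k_j\geq 1}\int_{\overline{M}_{0,n}}\psi_1^{k_1}\cdots\psi_j^{k_j-1}\cdots\psi_n^{k_n},
$$
which, together with the base case $\int_{\overline{M}_{0,3}}1=1$, determines all genus-zero descendant integrals and yields the multinomial by induction. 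These two relations have a transparent gauge-theoretic meaning: the translation symmetry $\nu\mapsto \nu+c$ of $e^{\nu/t}$ for constant $c\in\PV(X)$ reproduces the string equation, while the dilaton scaling $\nu\mapsto \nu+ t\log\lambda$ that makes $\WL_X-t$ a cone reproduces the dilaton equation. Verifying that $\Tr\abracket{e^\mu}_0$ obeys both and shares the cubic leading term $\tfrac{1}{6}\Tr(\mu^3)$ then gives an alternative, recursion-based identification with $S^{BCOV}$.

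The main obstacle is the bookkeeping in the second step, namely the nonlinear change of frame between the exponential parameter $\nu$ and the linear symplectic coordinate $\mu=q(\nu)$. Because the dilaton shift (the $-t$ in $\WL_X-t$) enters exactly here, the naive derivatives of $\Tr\abracket{e^\nu}_0$ in $\nu$ do \emph{not} equal the fibre components $P(\nu)$; one must invert $\mu=q(\nu)$ order by order and check that the corrections conspire to restore precisely the string-equation recursion. Controlling this reparametrization, and confirming that it reproduces the honest intersection numbers on $\overline{M}_{0,n}$ rather than a twisted variant, is where the substantive work lies.
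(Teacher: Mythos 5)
The paper itself offers no proof of this proposition --- it is quoted from \cite{Si-Kevin} --- so your proposal must stand on its own, and it does not yet: the step carrying all of the content is precisely the one you defer. Your first and third steps are correct and standard: the expansion $t-te^{\nu/t}=-\sum_{n\geq 1}\frac{1}{n!}\nu^n t^{1-n}$, the splitting into $S_\pm(X)$, the reading of the graph condition as $P(\nu)=d_\mu S^{BCOV}$ at $\mu=q(\nu)$, and the identification $\int_{\overline{M}_{0,n}}\psi_1^{k_1}\cdots\psi_n^{k_n}=\binom{n-3}{k_1,\dots,k_n}$ by induction on the string equation with base case $\int_{\overline{M}_{0,3}}1=1$. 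But the assertion that, after inverting $\mu=q(\nu)$, the residue coefficients ``organise precisely into the multinomials'' \emph{is} the proposition; you supply no mechanism for it beyond acknowledging that this is where the substantive work lies. (The claim is true --- one can check, e.g., that the quartic terms only match after the inversion corrections are included --- but an all-order argument, say by Lagrange inversion or by a structural symmetry argument, is exactly what a proof must contain.)

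Moreover, the fallback you offer in its place is false as stated: string + dilaton + the cubic term $\frac16\Tr(\mu_0^3)$ do \emph{not} determine a genus-zero descendant potential. These are two linear constraints governing insertions of $1\cdot t^0$ and $1\cdot t^1$; they give no recursion producing, for instance, the primary quartic coefficient from the cubic one. Concretely, on a two-dimensional Frobenius algebra both $\frac12 t_0^2t_1$ and $\frac12 t_0^2t_1+t_1^4$ satisfy WDVV with the same cubic term, and their descendant extensions both satisfy string and dilaton yet differ at fourth order; pinning down descendants from primaries requires the topological recursion relations or an equivalent. What would rescue your symmetry idea is to use the \emph{whole} symmetry family of the cone rather than the two flows you list: $\WL_X-t$ is the orbit of $-t$ under multiplication by $e^{\alpha/t}$ for \emph{all} $\alpha\in\PV(X)[[t]]$, since the flow of $\dot x=\frac{\alpha}{t}(x-t)$ from $x=0$ reaches $t-te^{\alpha/t}$ at time one. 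Tangency of each of these affine vector fields to the graph of $d\bracket{\Tr\abracket{e^\mu}_0}$ reduces to elementary identities among multinomial coefficients (the string-equation recursion and its generalizations to $t^m$-insertions), and then transitivity, together with the fact that both the cone and the graph contain $0$ and project isomorphically onto $S_+(X)$, forces them to coincide. With only string, dilaton and the cubic term, the identification does not follow.
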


\begin{rmk}
If we restrict $\mu$ to be elements from $t^0\PV(X)$,  then $S^{BCOV}$ becomes cubic and we recover the \emph{Yukawa coupling} of the usual Kodaira-Spencer gauge theory introduced in \cite{BCOV}. Strictly speaking, $S^{BCOV}$ defines the interaction part of BCOV theory, while the quadratic kinetic term is non-local and degenerate \cite{BCOV}. However, the propagator and Feynman diagrams are still well-defined, which allows a well-behaved perturbative renormalization \cite{Si-Kevin}. This has a natural generalization to Landau-Ginzburg B-model \cite{Li-LG}. 
\end{rmk}

\subsection{Potential function and BCOV action}

\subsubsection{Functionals and dual spaces}  Let $S_+(X)^{\otimes n}$ be the completed projective tensor product of n copies of $S_+(X)$. It can be viewed as the space of smooth polyvector f\/ields on $X^n$ with a formal variable $t$ for each factor.  Let
\[
   \OO^{(n)}(S_+(X))=\Hom\bracket{S_+(X)^{\otimes n}, \C}_{S_n}
\]
denote the space of continuous linear maps (distributions), and the subscript $S_n$ denotes ta\-king~$S_n$ coinvariants. $\OO^{(n)}(S_+(X))$ will be the space of homogeneous degree n functionals on the space of f\/ields~$S_+(X)$, playing the role of~$\Sym^n(V^{\vee})$ in the case of f\/inite-dimensional vector space~$V$. We will also let
\[
   \Ol^{(n)}(S_+(X)) \subset \OO^{(n)}(S_+(X))
\]
be the subspace of local functionals, i.e.\ those of the form given by the integration of a Lagrangian density
\[
   \int_X \mathcal L(\mu), \qquad \mu\in S_+(X).
\]

\begin{defn}
The algebra of functionals $\OO(S_+(X))$ on $S_+(X)$ is def\/ined to be the product
\[
    \OO(S_+(X))=\prod_{n\geq 0}  \OO^{(n)}(S_+(X)),
\]
and the space of local functionals is def\/ined to be the subspace
\[
   \Ol(S_+(X))=\prod_{n\geq 0} \Ol^{(n)}(S_+(X)).
\]
\end{defn}
In particular, $S^{BCOV}\in \Ol(S_+(X))$. There are similar definitions for $\OO(S(X))$ and $\Ol(S(X))$.

We will also let $\overline{S(X)}$ be the distributional sections of $S(X)$. Using the symplectic form $\omega$, we can identify
\[
   \overline{S(X)}\iso \OO^{(1)}(S(X))
\]
with a natural embedding
\[
   \Psi: S(X)\into \OO^{(1)}(S(X))
\]

The differential $Q$ induces a derivation on $\OO(S(X))$. The ellipticity of the $\dbar$-operator implies that $\Psi$ is a quasi-isomorphism, and it extends to quasi-isomorphic embedding
\[
   \Psi: \prod_{n}\Sym^n\bracket{S(X)}\into \OO(S(X))
\]

We will also use $\OO(\H)$ to denote the formal functions on $\H=H^*(S(X), Q)$. The symplectic pairing $\omega$ allows us to identify
\[
   \OO(\H)\iso \prod_n\Sym^n(\H)
\]
and $\Psi$ gives a natural isomorphism
\[
   \bar\Psi: \OO(\H)\stackrel{\simeq}{\to} H^*(\OO(S(X)), Q)
\]

\subsubsection{Classical master equation and $L_\infty$-structure}
Let
\[
  \pi_\pm: S(X)\to S_{\pm}(X)
\]
be the projections. Then
\[
  \pi_+: \WL_X\to S_+(X)
\]
is an isomorphism of formal graded manifolds \cite{Si-Kevin}.
\begin{lem}[\cite{Si-Kevin}]\label{Q-tangent} The derivation $Q$, viewed as a vector field on $S(X)$, is tangent to $\WL_X$.
\end{lem}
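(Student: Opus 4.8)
The plan is to verify the tangency condition pointwise. Every point of $\WL_X$ has the form $t-te^{\mu/t}$ for a unique $\mu\in S_+(X)$. First I would compute the tangent space there by differentiating along $\mu$: for $\nu\in S_+(X)$,
$$
  \frac{d}{ds}\Big|_{s=0}\bracket{t-te^{(\mu+s\nu)/t}}=-\nu\,e^{\mu/t},
$$
so $T_{t-te^{\mu/t}}\WL_X=\fbracket{\nu\,e^{\mu/t}\mid \nu\in S_+(X)}$. Since $Q$ is linear, its value as a vector field at the point $t-te^{\mu/t}$ is just $Q\bracket{t-te^{\mu/t}}$, and the lemma reduces to showing that this lies in the above tangent space for every $\mu$.

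The core step is the computation of $Q\bracket{t-te^{\mu/t}}$, which I would carry out by splitting $Q=\dbar+t\pa$ and exploiting the algebraic nature of each piece. One has $Q(t)=0$ because $t\cdot 1$ is $Q$-closed. Since $\dbar$ is a derivation of the wedge product (and commutes with multiplication by $t$),
$$
  \dbar\bracket{te^{\mu/t}}=(\dbar\mu)\,e^{\mu/t}.
$$
The operator $\pa$ is instead the second-order BV operator whose failure to be a derivation is measured by the bracket $\fbracket{-,-}$; applying the BV exponential identity $\pa(e^f)=\bracket{\pa f+\tfrac12\fbracket{f,f}}e^f$ with $f=\mu/t$ and using $t$-linearity of $\pa$ gives
$$
  t\pa\bracket{te^{\mu/t}}=\bracket{t\pa\mu+\tfrac12\fbracket{\mu,\mu}}e^{\mu/t}.
$$
Adding the two contributions, I expect
$$
  Q\bracket{t-te^{\mu/t}}=-\bracket{Q\mu+\tfrac12\fbracket{\mu,\mu}}e^{\mu/t}.
$$

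To conclude, it suffices to observe that $\nu:=Q\mu+\tfrac12\fbracket{\mu,\mu}$ lies in $S_+(X)$: indeed $Q$ preserves $S_+(X)=\PV(X)[[t]][2]$, and the bracket, being $t$-bilinear and built from $\pa$ and the product, introduces no negative powers of $t$, so $\fbracket{\mu,\mu}\in S_+(X)$ as well. Hence $Q\bracket{t-te^{\mu/t}}=-\nu\,e^{\mu/t}$ belongs to $T_{t-te^{\mu/t}}\WL_X$, which is exactly the assertion. I expect the main obstacle to be the careful bookkeeping in the BV exponential identity: $\pa$ is only second order, so the derivation $\dbar$ and the operator $\pa$ must be handled separately and the signs tracked through the degree-$2$ variable $t$ and the shift $[2]$. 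Conceptually the content is that the coefficient $Q\mu+\tfrac12\fbracket{\mu,\mu}$ is precisely the Maurer--Cartan functional of the differential graded Lie algebra $\bracket{S_+(X),Q,\fbracket{-,-}}$, so $Q$ vanishes exactly along the locus of genuine Maurer--Cartan solutions, consistent with $Q=0$ in cohomology.
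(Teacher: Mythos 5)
Your proof is correct, and it is essentially the standard argument: the paper itself states this lemma without proof (citing \cite{Si-Kevin}), and the computation there is exactly yours — writing $Q=\dbar+t\pa$, using that $\dbar$ is a derivation while $\pa$ is second order, so that $Q\bracket{t-te^{\mu/t}}=-\bracket{Q\mu+\tfrac12\fbracket{\mu,\mu}}e^{\mu/t}$, which lies in the tangent space $\fbracket{\nu e^{\mu/t}\mid \nu\in S_+(X)}$. This also dovetails with the paper's own tangent-space formula $T_\mu\L_X=\{\alpha e^{\mu/t}\}$ and with the Maurer--Cartan description of $\L_X$, so no gap remains.
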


Let $\hat Q=\pi_{+*}(Q)|_{\WL_X}$ be the push-forward of the vector field $Q$ on $\WL_X$. It becomes an odd nilpotent vector field on $S_+(X)$, which is equivalent to a $L_\infty$-structure.

\begin{defn} We define the kernel $K$ by the distributional section of $\PV(X)\otimes \PV(X)$
\[
   K=(\pa\otimes 1)\delta
\]
where $\delta$ is the delta-function distribution supported on the diagonal, characterized by the property
\[
  \bracket{\delta, \alpha\otimes \beta}=\Tr\bracket{\alpha\beta}, \forall \alpha, \beta\in \PV(X)
\]
\end{defn}

It's easy to see that $K$ is symmetric, and defines a Poisson bracket as usual
\[
   \{,\}: \Ol(S_+(X))\otimes \OO(S_+(X))\to \OO(S_+(X))
\]
Note that since $K$ is distribution valued, one of the input of the Poisson bracket is required to be local. We refer to \cite{Si-Kevin} for more careful explanation about the construction.

\begin{prop}\cite{Si-Kevin} The induced $L_\infty$-structure on $S_+(X)$
\[
   \hat Q: \OO(S_+(X))\to \OO(S_+(X))
\]
is given by
$$
   \hat Q=Q+ \fbracket{S^{BCOV},-}
$$
where the first term $Q$ is that induced dually from the derivation $Q: S_+(X)\to S_+(X)$.
\end{prop}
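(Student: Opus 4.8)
The plan is to compute the pushed-forward derivation $\hat Q$ directly from its definition $\hat Q=\pi_{+*}(Q)|_{\WL_X}$, using that $\pi_+:\WL_X\to S_+(X)$ is an isomorphism and that $Q$ is tangent to $\WL_X$ (Lemma \ref{Q-tangent}). Concretely, for $F\in\OO(S_+(X))$ I would write $\hat Q(F)=(Q\,\pi_+^*F)|_{\WL_X}$, where tangency of $Q$ guarantees that the right-hand side is independent of the chosen extension and hence well defined. Everything then reduces to understanding $Q=\dbar+t\pa$ through the block decomposition determined by $S(X)=S_+(X)\oplus S_-(X)$.

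First I would record the block structure of $Q$. Since $\dbar$ preserves the power of $t$, it preserves both $S_+(X)$ and $S_-(X)$. The operator $t\pa$ raises the $t$-degree by one, so it maps $S_+(X)$ into $S_+(X)$; on $S_-(X)=t^{-1}\PV(X)[t^{-1}][2]$ it preserves $S_-(X)$ except on the top component $t^{-1}\PV(X)$, which it sends to $t^0\PV(X)\subset S_+(X)$ via $t^{-1}\alpha\mapsto\pa\alpha$. Hence $Q(S_+(X))\subset S_+(X)$, and the only off-diagonal block is a map $Q_{+-}:S_-(X)\to S_+(X)$ which is precisely $\pa$ acting on the $t^{-1}$-coefficient. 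In particular $Q|_{S_+(X)}$ is an honest endomorphism of $S_+(X)$, whose dual derivation on $\OO(S_+(X))$ is exactly the first term $Q$ in the statement.

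It remains to identify the nonlinear part with $\fbracket{S^{BCOV},-}$. Using $\omega$ to identify $S_-(X)\iso S_+(X)^\vee$ (nondegeneracy follows from the residue pairing, under which $t^{-1}\PV(X)$ is dual to $t^0\PV(X)$), a point of $\WL_X=\mbox{Graph}(dS^{BCOV})$ over $q\in S_+(X)$ is $(q,\,dS^{BCOV}(q))$. Acting on $\pi_+^*F=F(q)$ gives $Q(\pi_+^*F)=\abracket{dF,\;Q|_{S_+}q+Q_{+-}p}$; restricting to $p=dS^{BCOV}(q)$ splits $\hat Q(F)$ into the linear term above plus $\abracket{dF,\,Q_{+-}\,dS^{BCOV}(q)}$. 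Since $S^{BCOV}$ begins in cubic order, $dS^{BCOV}$ is at least quadratic, so the off-diagonal block contributes only to the nonlinear part and the linear part of $\hat Q$ is exactly $Q|_{S_+(X)}$. The final step is to check that $Q_{+-}$, viewed through $S_-(X)\iso S_+(X)^\vee$ as a bivector on $S_+(X)$, is exactly the Poisson kernel $K=(\pa\otimes 1)\delta$, so that $\abracket{dF,\,Q_{+-}\,dS^{BCOV}}=\fbracket{S^{BCOV},F}$.

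The main obstacle I expect is this last identification: one must match the single $\pa$ appearing in $Q_{+-}$ with the kernel $(\pa\otimes 1)\delta$, keeping track of the Koszul signs from the grading on $\PV(X)$ and of the $t\mapsto -t$ twist built into the pairing $\bracket{f(t)\alpha,g(t)\beta}=f(t)g(-t)\Tr(\alpha\beta)$. Equivalently, one checks that the $t^{-1}\leftrightarrow t^{0}$ duality imposed by $\Res_{t=0}$ is precisely what converts the off-diagonal operator $\pa$ into the symmetric bivector $K$; self-adjointness of $\pa$ for $\Tr$ is what guarantees that $K$ is symmetric and that the resulting Hamiltonian vector field agrees with $\fbracket{-,-}$ on every Taylor component of $S^{BCOV}$.
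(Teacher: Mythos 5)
This proposition carries no proof in the paper itself; it is quoted directly from \cite{Si-Kevin}, so there is no in-paper argument to compare yours against. Your derivation is correct and is essentially the canonical one behind the cited result: the block decomposition of $Q=\dbar+t\pa$ relative to $S(X)=S_+(X)\oplus S_-(X)$ (upper triangular, with the single off-diagonal block $Q_{+-}$ given by $\pa$ on the $t^{-1}$-coefficient), combined with the graph description of $\WL_X$, yields
$$
\hat Q(F)(q)=\abracket{dF(q),\,Q|_{S_+(X)}\,q}+\abracket{dF(q),\,Q_{+-}\,dS^{BCOV}(q)},
$$
and the residue pairing, which dualizes $t^{-1}\PV(X)$ against $t^{0}\PV(X)$, converts $Q_{+-}$ into the kernel $K=(\pa\otimes 1)\delta$, hence the second term into $\fbracket{S^{BCOV},F}$. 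The checks you defer are indeed routine (self-adjointness of $\pa$ for $\Tr$ gives the symmetry of $K$), and one of them deserves a word: since $S^{BCOV}$ is local, $Q_{+-}\,dS^{BCOV}(q)$ is a smooth polyvector field, so its pairing against the possibly distributional $dF$ is well defined---this is exactly where the paper's requirement that one argument of $\fbracket{-,-}$ be a local functional enters.
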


The nilpotent nature $\hat Q^2=0$ is equivalent to the \emph{classical master equation}
$$
   QS^{BCOV}+{1\over 2}\fbracket{S^{BCOV}, S^{BCOV}}=0
$$
In physics terminology, $Q+\fbracket{S^{BCOV},-}$ defines the gauge symmetry in the \BV formalism. In this way, $S^{BCOV}$ defines a meaningful interacting gauge theory for polyvector fields on Calabi-Yau manifolds.

\subsubsection{Lagrangian cone and generating functional} Let $I_{\WL_X}\subset \OO(S(X))$ be the formal ideal of the Lagrangian $\WL_X$. If we identify $\WL_X$ with $S_+(X)$ under the projection $\pi_+$, then we have an induced exact sequence
$$
   0\to I_{\WL_X}\to \OO(S(X))\stackrel{\eta}{\to} \OO(S_+(X))\to 0
$$
$I_{\WL_X}$ is preserved by $Q$ by Lemma \ref{Q-tangent}. The map $\eta$ is generated topologically on the generators:
$$
    \eta(\mu)=\begin{cases} \Psi(\mu) & \mbox{if}\ \mu \in t^{-1}\PV(X)[t^{-1}] \\
         \pa_\mu S^{BCOV} & \mbox{if}\ \mu \in \PV(X)[[t]]\end{cases}
$$
where ${\pa_{\mu}S^{BCOV}}$ is the derivation of $S^{BCOV}$ with respect to $\mu$ in the sense of formal geometry.
 If we associate the differential $Q$ to $I_{\WL_X}, \OO(S(X))$, and associate $\hat Q=Q+\fbracket{S^{BCOV},-}$ to $\OO(S_+(X))$, then the above sequence is an exact sequence of complexes.

\begin{prop} We have an exact sequence
$$
  0\to H^*(I_{\WL_X}, Q)\to H^*(\OO(S(X)),Q)\stackrel{\eta}{\to} H^*(\OO(S_+(X)), \hat Q)\to 0
$$
\end{prop}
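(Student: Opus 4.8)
The plan is to obtain the stated sequence from the long exact sequence in cohomology attached to the short exact sequence of complexes
$$
0\to \bracket{I_{\WL_X},Q}\to \bracket{\OO(S(X)),Q}\xrightarrow{\eta} \bracket{\OO(S_+(X)),\hat Q}\to 0
$$
established just above. The long exact sequence already gives exactness at the middle term, so the content of the proposition is the vanishing of the connecting homomorphism $\delta\colon H^*(\OO(S_+(X)),\hat Q)\to H^{*+1}(I_{\WL_X},Q)$. Equivalently one must show that $\eta$ is surjective and $\iota$ injective on cohomology. Since each of these is equivalent to $\delta=0$ via the long exact sequence, neither can be bootstrapped from the other, and the argument must feed in genuine external information: for every $\hat Q$-cocycle $\phi\in\OO(S_+(X))$ we have to produce a $Q$-cocycle in $\OO(S(X))$ restricting to it.

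First I would introduce the decreasing filtration $F^p\OO(S_+(X))=\prod_{n\geq p}\OO^{(n)}(S_+(X))$ by polynomial degree in the fields, and likewise on $\OO(S(X))$ and $I_{\WL_X}$. The linear part $Q$ preserves polynomial degree, while $\fbracket{S^{BCOV},-}$ strictly raises it because $S^{BCOV}$ is at least cubic; hence $\hat Q$ is filtered with associated graded differential exactly $Q$. The section $\pi_+^*$ of $\eta$, pullback along $\pi_+\colon S(X)\to S_+(X)$, preserves polynomial degree and exhibits $\eta$ as a strict filtered surjection, so the short exact sequence of complexes descends to a short exact sequence of associated graded complexes, all carrying the differential $Q$. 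On the associated graded, $\eta$ becomes the restriction of functions to the zero section $S_+(X)\into S(X)$.

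The heart of the argument is then the associated graded statement. Using that $\Psi$ is a quasi-isomorphism by ellipticity of $\dbar$, I identify $H^*(\OO(S(X)),Q)\iso \OO(\mc H)$ and $H^*(\OO(S_+(X)),Q)\iso\OO(\E_0)$ with $\E_0=H^*(S_+(X),Q)$. Under these identifications $\mathrm{gr}\,\eta$ becomes the restriction $\OO(\mc H)\to\OO(\E_0)$ along the inclusion $\E_0\into\mc H$; the chosen polarization $\mc H=\mc H_-\oplus\E_0$ splits this inclusion, so the restriction is surjective with section given by pullback along the projection $\mc H\to\E_0$. Hence on the associated graded the connecting map vanishes and we obtain short exact sequences $0\to H^*(\mathrm{gr}\,I_{\WL_X},Q)\to\OO(\mc H)\to\OO(\E_0)\to 0$.

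Finally I would lift this to the unfiltered complexes by an order-by-order argument: given a $\hat Q$-cocycle $\phi$, solve $\eta\tilde\phi=\phi$ together with $Q\tilde\phi=0$ inductively along the polynomial-degree filtration, using the graded surjectivity to build each correction and the graded injectivity of $\iota$ to annihilate each successive obstruction, which lives in $H^*(\mathrm{gr}\,I_{\WL_X},Q)$. Because $\OO=\prod_n\OO^{(n)}$ is complete for this filtration, the resulting series converges and defines the required $Q$-cocycle, yielding $\delta=0$. The main obstacle is precisely the control of the interaction $\fbracket{S^{BCOV},-}$: one must verify that it only contributes in strictly higher filtration order so that the associated graded differential is the bare $Q$, and that the inductive corrections assemble into a convergent element of the completed space. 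The cubic-and-higher nature of $S^{BCOV}$, together with completeness of the filtration, is exactly what makes both points work.
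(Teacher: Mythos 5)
Your argument is correct in outline and would prove the proposition, but it follows a genuinely different route from the paper. The paper's proof is Hodge-theoretic and works with explicit representatives: fixing a K\"{a}hler metric, it uses a spectral sequence to produce isomorphisms $\Phi:\Sym(\HH((t)))\to H^*(\OO(S(X)),Q)$ and $\Psi:\Sym(t^{-1}\HH[t^{-1}])\to H^*(\OO(S_+(X)),\hat Q)$, where $\HH$ denotes harmonic polyvector fields; the key input, which your proof never needs, is the vanishing $\fbracket{S^{BCOV},\Psi(\mu)}=0$ for $\mu\in t^{-1}\HH[t^{-1}]$, so that monomials in the $\Psi(\mu)$ are honest $\hat Q$-cocycles (not merely leading-order ones), and surjectivity of $\eta$ on cohomology is then read off from the commuting triangle $\eta\circ\Phi=\Psi$ with no corrections, no induction, and no separate appeal to completeness. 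You replace this degeneration-on-the-nose by an abstract filtered comparison: associated graded differential equal to $Q$ (since $S^{BCOV}$ is at least cubic), graded surjectivity from the polarization splitting $\mc H=\mc H_-\oplus \E_0$, and an order-by-order lift that converges by completeness of $\prod_n\OO^{(n)}$. What the paper's route buys is exactly what the rest of the section consumes: the concrete harmonic model $\Sym(t^{-1}\HH[t^{-1}])$ of $H^*(\OO(S_+(X)),\hat Q)$ is what makes the homotopy inverse $\Psi^{-1}=\Pi\sum_{k\geq 0}\bracket{\fbracket{S^{BCOV},-}G}^k$ and the Feynman-tree description of $\F^{BCOV}_0$ possible. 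What your route buys is generality and economy of hypotheses: it isolates the purely formal inputs (interaction of polynomial degree at least three, one polarization, completeness), uses no K\"{a}hler identities in the induction itself, and is in effect the homological perturbation lemma that the paper invokes anyway in the following step. Two small points to tighten: (i) your remark that surjectivity of $\eta$ and injectivity of the inclusion ``cannot be bootstrapped from each other'' is slightly off --- as you yourself note, each is equivalent to the vanishing of all connecting maps, so proving surjectivity in every degree (which your induction does) yields injectivity for free; (ii) the identification of $\op{gr}\eta$ with restriction to the zero section deserves a line of justification from the paper's formula for $\eta$ on generators: $\eta(\mu)=\pa_\mu S^{BCOV}$ for $\mu\in\PV(X)[[t]]$ has polynomial degree at least two and hence dies in the associated graded, while $\eta(\mu)=\Psi(\mu)$ for $\mu\in t^{-1}\PV(X)[t^{-1}]$ preserves degree.
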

\begin{proof}
Let's fix a choice of K\"{a}hler metric on $X$, $\dbar^*$ be the adjoint of $\dbar$ and $\Delta=\dbar\dbar^*+\dbar^*\dbar$ be the Laplacian on $\PV(X)$. We denote by
$$
   \HH=\{\mu \in \PV(X)| \Delta \mu=0 \}\subset \PV(X)
$$
be the space of harmonic polyvector fields. A simple application of spectral sequence shows that we have the natural isomorphism
$$
    \Sym(\HH((t)))\stackrel{\Phi}{\to}  H^*(\OO(S(X)),Q), \quad \Sym(t^{-1}\HH[t^{-1}]) \stackrel{\Psi}{\to} H^*(\OO(S_+(X)), \hat Q)
$$
Here we use the fact that if $\mu\in t^{-1}\HH[t^{-1}]$, then $\fbracket{S^{BCOV}, \Psi(\mu)}=0$. The following diagram commutes
$$
  \xymatrix{\Sym(t^{-1}\HH[t^{-1}])\ar[r]^{\Phi}\ar[rd]^{\Psi} & H^*(\OO(S(X)),Q)\ar[d]^{\eta}\\
          & H^*(\OO(S_+(X)), \hat Q)
  }
$$
which implies the surjectivity of $\eta$ on cohomology.
\end{proof}
If we identify $H^*(\OO(S(X)), Q)=\OO(\mc H)$, then $H^*(I_{\WL_X}, Q)$ is the ideal defining the Lagrangian $\L_X\subset \H$ at the cohomology level.

We would like to find the corresponding generating functional with respect to the complex conjugate filtration.

To find the kernel of $\eta$, we consider $\mu\in \HH[[t]]$, viewed as an element of $ H^*(\OO(S(X)),Q)$. Then
$$
  \eta(\mu)={\pa_{\mu}S^{BCOV}}
$$
To find the representative of $\eta(\mu)$ via harmonic elements, we need to find the homotopic inverse of
$$
  \Psi: \Sym(t^{-1}\HH[t^{-1}]) {\to} H^*(\OO(S_+(X)), \hat Q)
$$
This is essentially an application of homological perturbation lemma. Consider the operator
$$
  G= {\dbar^*\over \Delta}
$$
Since $1-\bbracket{Q, G}$ is the harmonic projection, it gives a homotopic contraction for the embedding
$$
   \Sym\bracket{t^{-1}\HH[t^{-1}]}\into \bracket{\OO(S_+(X)), Q}
$$
If we view $\hat Q$ as the homological perturbation of $Q$ by $\fbracket{S^{BCOV},-}$, then the homotopic inverse of $\Psi$ is given by
$$
   \Psi^{-1}= \Pi \sum_{k\geq 0}\bracket{\{S^{BCOV},-\}G}^k
$$
where in the last step $\Pi$ is the Harmonic projection. It follows that the kernel of $\eta$ is generated by
$$
   \mu-\Pi \bracket{\sum_{k\geq 0}\bracket{\{S^{BCOV},-\}G}^k \pa_\mu S^{BCOV}}, \quad \mu\in  \HH[[t]]
$$

\begin{defn}
The genus zero partition function $\F^{BCOV}_0\in \OO(\HH[[t]])$ using BCOV theory is defined to be
$$
   \F^{BCOV}_0=\sum_{\Gamma: \mbox{Tree}} {W_\Gamma(P, S^{BCOV})\over |Aut(\Gamma)|}
$$
where $P$ is the kernel of the operator ${\dbar^*\pa\over \Delta}$, $W_\Gamma(P, S^{BCOV})$ is the Feynman diagram integral with $S^{BCOV}$ as the vertices and $P$ as the propagator, $Aut(\Gamma)$ is the size of the automorphism group as graphs. The summation is over all connected tree diagrams with external edges where we put harmonic polyvector fields.
\end{defn}

\begin{rmk}
$P$ is the full propagator of Kodaira-Spencer gauge theory \cite{BCOV, Si-Kevin}.
\end{rmk}

The usual trick on Feynman diagrams gives
$$
\pa_\mu \F^{BCOV}_0=\Pi\bracket{\sum_{k\geq 0}\bracket{\{S^{BCOV},-\}G}^k \pa_\mu S^{BCOV}}
$$
and the above calculation implies that
$$
  \mu-\pa_\mu \F^{BCOV}_0 \in \ker(\eta), \quad \quad \mu\in  \HH[[t]]
$$
This proves the following

\begin{prop}
The generating functional of the Lagrangian $\L_X\subset \H$ with respect to the complex conjugate filtration is given by $\F^{BCOV}_0$.
\end{prop}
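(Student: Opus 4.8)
The plan is to deduce the statement from the identification $\mc H = T^*(\E_0)$ recorded above, together with the exact sequence of cohomologies established in the preceding Proposition. Recall that under the splitting $\mc H = \E_0 \oplus \mc H_-$ the base is $\E_0 = \HH[[t]]$ and the cotangent fibre is $\mc H_- = t^{-1}\HH[t^{-1}]$; by definition a formal function $\F \in \OO(\E_0)$ is the generating functional of $\L_X$ precisely when $\L_X = \mbox{Graph}(d\F)$. Via the symplectic form $\omega$ each $\mu \in \HH[[t]] = \E_0$ determines a linear coordinate along the cotangent fibre, and the graph condition $\L_X = \mbox{Graph}(d\F)$ is equivalent to saying that the defining ideal of $\L_X$ is generated by the collection of relations $\mu - \pa_\mu \F$, one for each $\mu \in \HH[[t]]$. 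So the plan reduces the claim to showing that the ideal of $\L_X$ is generated by $\{\mu - \pa_\mu \F^{BCOV}_0 : \mu \in \HH[[t]]\}$.

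First I would invoke the exact sequence established above,
$$0 \to H^*(I_{\WL_X}, Q) \to H^*(\OO(S(X)), Q) \stackrel{\eta}{\to} H^*(\OO(S_+(X)), \hat Q) \to 0,$$
together with the identification $H^*(\OO(S(X)), Q) = \OO(\mc H)$, under which $\ker(\eta) = H^*(I_{\WL_X}, Q)$ is exactly the ideal cutting out $\L_X$. Next I would feed in the homological-perturbation computation carried out above: the operator $G = \dbar^*/\Delta$ furnishes the contracting homotopy, so the homotopy inverse of $\Psi$ is $\Psi^{-1} = \Pi\sum_{k\geq 0}(\{S^{BCOV},-\}G)^k$, and the generator of $\ker(\eta)$ attached to $\mu \in \HH[[t]]$ is $\mu - \Pi(\sum_{k\geq 0}(\{S^{BCOV},-\}G)^k \pa_\mu S^{BCOV})$. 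On the other hand the Feynman-diagram identity recorded above gives $\pa_\mu \F^{BCOV}_0 = \Pi(\sum_{k\geq 0}(\{S^{BCOV},-\}G)^k \pa_\mu S^{BCOV})$, so the two expressions coincide and $\mu - \pa_\mu \F^{BCOV}_0 \in \ker(\eta)$ for every $\mu$. Finally, since these relations express each fibre coordinate $\mu$ as a function $\pa_\mu \F^{BCOV}_0$ on the base, passing to the quotient by the ideal they generate collapses $\OO(\mc H)$ onto $\OO(\E_0)$; comparing with the surjection $\eta$ onto $H^*(\OO(S_+(X)), \hat Q) \iso \OO(\E_0)$ forces these relations to generate all of $\ker(\eta)$, whence $\L_X = \mbox{Graph}(d\F^{BCOV}_0)$ and $\F^{BCOV}_0$ is the generating functional.

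The step I expect to be the main obstacle is the Feynman-diagram identity $\pa_\mu \F^{BCOV}_0 = \Psi^{-1}(\pa_\mu S^{BCOV})$, namely that differentiating the sum over connected trees in an external leg reproduces the geometric series $\Pi\sum_k(\{S^{BCOV},-\}G)^k$ acting on $\pa_\mu S^{BCOV}$. This is where the homological perturbation lemma must be applied with the distributional propagator $P$ built from $G$, and one must check that the series is well-defined order by order in the fields and that, at genus zero, only connected tree diagrams contribute. Granting the renormalization framework of \cite{Si-Kevin}, the argument is then purely combinatorial — matching the two tree expansions — but it is the genuine content underlying the otherwise formal passage from $\ker(\eta)$ to the graph of $d\F^{BCOV}_0$.
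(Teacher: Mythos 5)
Your proposal is correct and follows essentially the same route as the paper: identify $\ker(\eta)$ with the ideal of $\L_X$ via the exact sequence, compute its generators by the homological perturbation lemma with homotopy $G=\dbar^*/\Delta$, and match them with $\mu-\pa_\mu\F^{BCOV}_0$ through the tree-level Feynman diagram identity. You also correctly single out that identity (which the paper dispatches as ``the usual trick on Feynman diagrams'') as the real content of the argument.
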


Therefore the tree level partition function of BCOV theory is equivalent to the data of semi-infinite variation of Hodge structures on Calabi-Yau manifolds. In particular, the potential function for the corresponding Frobenius structure is given by the restriction of $\F_0^{BCOV}|_{\HH}$ to $\HH$.

\subsection{Quantization}
Finally we briefly remark the quantization approach in \cite{Si-Kevin} following the line of the above consideration. We take the point of view of the standard Weyl quantization.

Let $\mc W(\H)$ be the Weyl algebra of the symplectic space $\H$, which is the pro-free algebra generated by $\mc H^{\vee}$ over $\C[[\hbar]]$, subject to the relation that
$$
   [a,b]=\hbar \omega^{-1}(a,b), \forall a,b \in \H^{\vee}
$$
where $\omega^{-1}\in \wedge^2{\H}$ is the inverse of $\omega$. $\mc W(\H)$ is viewed as the non-commutative deformation of $\OO(\H)$ parametrized by $\hbar$, defining the sheaf of functions on a non-commutative space $\H^{\hbar}$ such that $\H^{\hbar}|_{\hbar=0}=\H$. The classical geometry of BCOV theory is described by the Lagrangian $\L_X\subset \mc H$, and the quantization is given by a $\C[[\hbar]]$-flat non-commutative deformation $\L_X^{\hbar}\subset \H^{\hbar}$ such that $\L_X^{\hbar}\cap \H=\L_X$.

Similar to the genus zero case, the deformation $\L_X^{\hbar}$ can be described by the classical BCOV action $S^{BCOV}$ and Feynman integrals with higher loop graphs. However, since the propagator $P={\dbar^*\pa\over \Delta}$ is singular along the diagonal, the higher loop Feynman diagrams are usually divergent. This is the usual ultra-violet difficulty in quantum field theory, and the standard solution is via renormalization. The general framework for the renormalization of BCOV theory along Costello's homological techniques \cite{Costello-book} is described in \cite{Si-Kevin}. In \cite{Si-Kevin}, the renormalization/quantization is interpreted in term of Fock space, and the ideal sheaf of $\L_X^{\hbar}$ in $\mc W(\H)$ defines a vector in the Fock space.



\bibliography{biblio}

\address{\tiny DEPARTMENT OF MATHEMATICS AND STATISTICS, BOSTON UNIVERSITY, 111 CUMMINGTON MALL, BOSTON} \\
\indent \footnotesize{\email{sili@math.bu.edu}}

\end{document}